\documentclass[12pt, a4paper]{amsart}
\usepackage{eurosym}
\usepackage{amssymb}
\usepackage{amsmath}
\usepackage{amssymb}
\usepackage{amsthm}
\usepackage{bbm}
\usepackage{bm}
\usepackage{mathtools}
\usepackage{mathrsfs}
\usepackage[T1]{fontenc}
\usepackage[usenames,dvipsnames,svgnames,table]{xcolor}
\usepackage{esint}
\usepackage{combelow}
\usepackage{enumitem}
\usepackage[colorlinks=true,citecolor=black,urlcolor=NavyBlue,
linkcolor=black]{hyperref}
\usepackage{graphicx}
\usepackage{tikz}
\usepackage{geometry}
\usepackage{booktabs}
\usepackage{tikz-cd}

\allowdisplaybreaks
\makeatletter
\newcommand*\bigcdot{\mathpalette\bigcdot@{.5}}
\makeatother
\newtheorem{theorem}{Theorem}
\newtheorem{lemma}[theorem]{Lemma}
\newtheorem{proposition}[theorem]{Proposition}

\newtheorem{corollary}[theorem]{Corollary}

\theoremstyle{remark}
\newtheorem{remark}[theorem]{Remark}

\theoremstyle{definition}
\newtheorem{definition}{Definition}
\normalsize

\renewcommand{\geq}{\geqslant}

\renewcommand{\leq}{\leqslant}

\begin{document}
\title[Comparison of systems of linear pde]{Comparison of\\  polynomial matrix differential operators}
\author{Eduard Curc\u{a} and Bogdan Rai\cb{t}\u{a}}
\subjclass[2020]{Primary: 35G35; Secondary: 35E20}
\keywords{Systems of linear partial differential equations with constant
coefficients, $L^2$-estimates, Comparison of linear partial differential
operators, Compactness of linear embeddings, Rellich--Kondrachov theorem}

\begin{abstract}
We characterize matrix polynomials $P,Q$ such that the inequality  
\begin{equation*}
\left\Vert Q(D)u\right\Vert _{L^{2}}\leq C\left\Vert P(D)u\right\Vert
_{L^{2}},\quad\text{for all }u\in C_c^\infty(\Omega),  
\end{equation*}
holds on bounded open sets $\Omega$. We also characterize the operators $P,Q$
for which the linear continuous embedding above is compact, i.e., if $u_n\in
C_c^\infty(\Omega)$ are such that $(P(D)u_n)_{n\geq 1}$ is bounded in $L^2$, then $%
(Q(D)u_n)_{n\geq 1}$ is strongly compact in $L^2$. 
\end{abstract}

\maketitle

\section{Introduction}

Let $p:\mathbf{R}^{d}\rightarrow \mathbf{C}$ be a non-zero polynomial and $B$
be the open unit ball in $\mathbf{R}^{d}$. A remarkable result of H\"{o}rmander (see \cite{Hor}) states that 
\begin{equation}
\left\Vert u\right\Vert _{L^{2}}\leq C\left\Vert p(D)u\right\Vert
_{L^{2}},\quad \text{for all }u\in C_{c}^{\infty }(B),  \label{Hor0}
\end{equation}%
where $D=-i\nabla $. We stress that this holds for \textit{any} nontrivial polynomial $p$ regardless the size of its zero set.

This main goals of this paper are, first, to extend H\"{o}rmander's
inequality to the case of systems (allowing for polynomial matrices $p$)
and, second, to characterize matrix polynomials $p$ such that the continuous
linear embedding in \eqref{Hor0} is compact (meaning that if $u_{n}\in
C_{c}^{\infty }(B)$ is such that $\sup_{n}\left\Vert p(D)u_{n}\right\Vert
_{L^{2}}<\infty $, then $u_{n}\rightarrow u$ in $L^{2}$ along a subsequence).

We next explain why both tasks are challenging and markedly different from %
\eqref{Hor0}.

\subsection{$L^{2}$-estimates}

As will become transparent, it is crucial that in \eqref{Hor0} both $u$ and $%
p$ are scalar. If $u$ is allowed to be a vector field, the inequality
clearly fails if $p$ is chosen to be, for instance, the divergence operator
(taking $u$ to be the $curl$ of a compactly supported potential). {As a
corollary of our main result, we will also construct an operator $p:\mathbf{R%
}^{d}\rightarrow \mathcal{M}_{2}(\mathbf{C})$ such that $\ker
_{C_{c}^{\infty }}p(D)=\{0\}$ and the inequality \eqref{Hor0} still fails.}
These examples show that the situation is substantially more complicated for
systems.

Our first main result result relies on the following generalization of %
\eqref{Hor0} from \cite{Hor}: Let $p,q:\mathbf{R}^{d}\rightarrow \mathbf{C}$
be polynomials. The inequality 
\begin{equation}
\left\Vert q(D)u\right\Vert _{L^{2}}\leq C\left\Vert p(D)u\right\Vert
_{L^{2}},\quad \text{for all }u\in C_{c}^{\infty }(B),  \label{Hor0.5}
\end{equation}%
holds if and only if $p$ \textit{dominates} $q$, meaning that there exists a
constant $C$ such that $\tilde{q}(\xi )/\tilde{p}(\xi )\leq C$ for all $\xi
\in \mathbf{R}^{d}$. Here $\tilde{p}\colon \mathbf{R}^{d}\rightarrow \lbrack
0,\infty )$ is defined by $\tilde{p}(\xi )^{2}=\sum_{\alpha }|\partial^{\alpha
}p(\xi )|^{2}$.

We will extend this concept to polynomial matrices, to which end we will use the Moore--Penrose generalized inverse $A^+\in \mathcal{M}_{N\times M}(\mathbf C)$ of a matrix $A\in \mathcal M_{M\times N}(\mathbf C)$ (see for instance \cite{Moore} and \cite{Penrose}), referred-to here simply as \textit{pseudoinverse} and the fact that the (pointwise) pseudoinverse of a matrix polynomial is a rational function, which follows immediately from the formula given in Theorem 3 of \cite{Dec}. 
\begin{definition}\label{def:dom}
    Let $P : \mathbf R^d\to\mathcal  M_{M\times N}(\mathbf C)$,  $Q : \mathbf R^d\to \mathcal M_{L\times N}(\mathbf C)$ be matrix polynomials. We write $QP^+=(q_{lj}/\Delta)_{l,j}$, where $q_{lj}$, $\Delta$ are polynomials. We say that $P$ \textbf{dominates} $Q$ (and write $Q\prec P$) if
    \begin{enumerate}
        \item[(i)] $\Delta$ dominates $q_{lj}$ for all $l,j$ and
        \item [(ii)] $\ker P(\xi)\subset \ker Q(\xi)$ for almost every $\xi\in \mathbb R^d$.
    \end{enumerate}
\end{definition}

We will show in Remark \ref{rem.inv} that our definition is independent of the choice of polynomials $q_{lj}$ and $\Delta$. We thus have the answer to our first question:

\begin{theorem}
\label{thm:main_ineq}  Let $P : \mathbf{R}^d\to \mathcal{M}_{M\times N}(%
\mathbf{C})$, $Q : \mathbf{R}^d\to \mathcal{M}_{L\times N}(\mathbf{C})$ be
matrix polynomials.  The following are equivalent: 

\begin{enumerate}
\item There is a constant $C>0$ such that  
\begin{equation*}
\left\Vert Q(D)u\right\Vert _{L^{2}}\leq C\left\Vert P(D)u\right\Vert
_{L^{2}},\quad\text{for all }u\in C_c^\infty(\Omega)^N.
\end{equation*}

\item $P$ dominates $Q$.
\end{enumerate}
\end{theorem}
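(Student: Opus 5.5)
We prove both directions through Fourier analysis, reducing the system to Hörmander's scalar comparison result \eqref{Hor0.5} together with its proof. Throughout, $\Omega$ is a bounded open set. Since the estimate is invariant under translation and monotone in $\Omega$, we may replace $\Omega$ by a ball when proving sufficiency. For necessity we use that $\Omega$ contains some small ball.

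\emph{Sufficiency ((2)$\Rightarrow$(1)).} Let $u\in C_c^\infty(\Omega)^N$ and set $f=P(D)u$. By (ii), for a.e.\ $\xi$ we have $\ker P(\xi)\subset\ker Q(\xi)$. Since $P(\xi)^+P(\xi)$ is the orthogonal projection onto $(\ker P(\xi))^\perp$, this gives $Q(\xi)=Q(\xi)P(\xi)^+P(\xi)$ a.e. Hence
\begin{equation*}
\Delta(\xi)\,\widehat{Q(D)u}(\xi)=\big(q_{lj}(\xi)\big)_{l,j}\,\hat f(\xi),
\end{equation*}
that is, $\Delta(D)Q(D)u=\mathcal{Q}(D)P(D)u$ with $\mathcal{Q}=(q_{lj})$. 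Both sides are polynomial identities, so this holds for all $u$.

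Now fix a component $l$ and set $v=Q(D)u$, so $v_l\in C_c^\infty(\Omega)$. We want $\|v_l\|_{L^2}\lesssim\sum_j\|P(D)u\|_{L^2}$. The scalar Hörmander theory gives $\|v_l\|\le C\|\Delta(D)v_l\|$, but $\Delta(D)v_l=\sum_j q_{lj}(D)f_j$, and bounding $\|q_{lj}(D)f_j\|$ by $\|f_j\|$ is false in general. So we argue instead through Hörmander's dual/kernel argument. The proof of \eqref{Hor0.5} produces a fundamental solution $E$ of $\Delta(D)$ with the property that, for every $q\prec\Delta$, the operator $w\mapsto q(D)(E*w)$ is bounded on $L^2$ of bounded sets after localization, in the sense that $\chi\,q(D)E*(\cdot)$ is $L^2$-bounded for $\chi\in C_c^\infty$. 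This comes from the estimate $|\hat E|$-type bounds in terms of $1/\tilde\Delta$ used to construct regular fundamental solutions. Since $v_l$ is compactly supported and $\Delta(D)v_l=g$, we have $v_l=E*g$, and therefore
\begin{equation*}
v_l=\chi\sum_j q_{lj}(D)E*f_j .
\end{equation*}
Using (i), each term is bounded by $C\|f_j\|_{L^2}$, where $f_j$ is supported in $\Omega$. This yields (1).

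\emph{Necessity ((1)$\Rightarrow$(2)).} For (ii), we test (1) with $u(x)=\phi(x)e^{ix\cdot\xi_0 t}\,\eta$ under a rescaling. The cleanest choice is $u_\lambda(x)=\phi(x)e^{i\lambda x\cdot\omega}\eta(\lambda\omega)$, but for simplicity we take a fixed $\xi_0$ and $\eta\in\ker P(\xi_0)$. We use $u=\phi(x)e^{ix\cdot\xi_0}\eta$ with $\phi$ a bump, and replace $\phi$ by $\phi(\varepsilon\cdot)$-type concentrations in frequency. These cannot be dilated inside a bounded $\Omega$, so we instead use translation invariance in frequency only. Then $P(D)u=e^{ix\xi_0}P(\xi_0+D)\phi\,\eta$, and letting $\phi$ have Fourier transform concentrated near $0$ is not available either.

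So the better route for (ii) is different. Suppose (ii) fails. Then on an open set of $\xi$ there is a rational (hence, after clearing denominators, polynomial) vector field $\eta(\xi)\in\ker P(\xi)$ with $Q(\xi)\eta(\xi)\ne0$. Taking $u=\eta(D)\psi$ gives $P(D)u=0$ while $Q(D)u=Q(D)\eta(D)\psi\neq0$ for suitable $\psi\in C_c^\infty(\Omega)$, which contradicts (1).

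For (i), assume (1) and (ii). For scalar $w\in C_c^\infty(\Omega)$ and each $j$, take $u=\mathrm{adj}$-type polynomial right inverse. Concretely, set $u=R(D)w\,e_j$, where $R$ is a polynomial matrix with $P R=\Delta'\cdot\Pi$ and $\Pi$ the projection onto the range, chosen so that $QR=(q_{lj})$ up to the common denominator. Then (1) gives $\|q_{lj}(D)w\|\le C\|\Delta(D)w\|$ for all $w$, and Hörmander's converse direction for scalars yields $q_{lj}\prec\Delta$.

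The main obstacle is this algebraic step: producing a polynomial matrix $R$ realizing $P^+$ up to the scalar $\Delta$, with the range issues of $P(\xi)$ handled. We would try the formula of \cite{Dec} expressing $P^+$ through $P^*$ and minors, together with Remark~\ref{rem.inv} to match denominators.
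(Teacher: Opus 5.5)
Your proposal is correct in substance. It reaches the two nontrivial implications by a different route from the paper; the argument for (ii) matches the paper's idea.

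\textbf{Sufficiency.} The paper argues by duality. It takes the scalar estimate in every $B_{2,k}$ and solves the adjoint equations $\Delta^*(D)u=q_{lj}^*(D)f$ up to a remainder $R$ supported away from $B$. It then assembles $\hat u=(P^*)^+Q^*\hat f+\hat R/\Delta^*$, uses (ii) as $P^*(P^*)^+Q^*=Q^*$, and pairs with $v$ to get $\|\Delta(D)Q(D)v\|_{B_{2,k}}\lesssim\|\Delta(D)P(D)v\|_{B_{2,k}}$. Finally it removes $\Delta$ by taking $k=1/\tilde\Delta$.

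You work on the primal side instead. Condition (ii) gives the polynomial identity $QAP=\Delta Q$, hence $\Delta(D)Q(D)u=(QA)(D)P(D)u$. You then invert $\Delta(D)$ on compactly supported functions with a regular fundamental solution $E$. Each $q_{lj}(D)$ is absorbed into $E$ via $|q_{lj}|\le\tilde q_{lj}\lesssim\tilde\Delta$ and $\tilde\Delta\,\widehat{\psi E}\in L^\infty$ for $\psi\in C_c^\infty$.

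Your route is shorter and makes the constant explicit. It also avoids bookkeeping of norms of the dual solutions, which the paper leaves implicit. The cost is a different black box: H\"ormander's regular fundamental solution theorem rather than the scalar comparison theorem. Cite that theorem directly, not ``the proof of \eqref{Hor0.5}''.

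\textbf{Necessity of (i).} The paper again uses duality (Proposition \ref{prop.2}). Your test functions $u=R(D)(we_j)$ give $\|q_{lj}(D)w\|_{L^2}\le C\|\Delta(D)w\|_{L^2}$ in one line, after which the scalar converse applies. This is exactly the argument the paper itself uses later in Proposition \ref{prop.4}.

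\textbf{Two fill-ins.}
\begin{enumerate}
\item The step you flag as the main obstacle is not one: take $R=A$ from \eqref{P+A}. Then $PA=\Delta PP^+$, and $PP^+$ is the orthogonal projection onto $\mathrm{im}\,P$. So $\|P(D)A(D)(we_j)\|_{L^2}\le\|\Delta(D)w\|_{L^2}$ by Plancherel, and $QA=(q_{lj})$ by definition. Neither the minor formula nor Remark \ref{rem.inv} is needed.
\item In the proof of (ii), make the polynomial kernel field explicit. The columns of $\Delta I-AP=\Delta(I-P^+P)$ are annihilated by $P$ identically. If $Q$ annihilated all of them, then $Q=QP^+P$ a.e., which is (ii). Otherwise some column $\eta$ gives $P(D)\eta(D)\psi=0\neq Q(D)\eta(D)\psi$, contradicting (1).
\end{enumerate}
Also delete the abandoned attempt with $\phi(x)e^{ix\cdot\xi_0}\eta$.
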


Our characterization makes it transparent that inequality \eqref{Hor0} fails
in the general vectorial case, take for example $Q=I_2$ and 
\begin{align*}
P(\xi)=\left( 
\begin{matrix}
\xi_1^2+\xi_2^2 & \xi_1 \\ 
\xi_2 & 0%
\end{matrix}
\right).
\end{align*}

Our proof of the estimate in Theorem \ref{thm:main_ineq} relies on the scalar case due to H\"ormander \cite{Hor} and a  duality argument which carefully uses the algebra relations in Definition \ref{def:dom}.
\subsection{Compactness}

Consider the following simple estimate 
\begin{equation}
\left\Vert u\right\Vert _{L^{2}}\leq C\left\Vert \Delta u\right\Vert
_{L^{2}},\quad \text{for all }u\in C_{c}^{\infty }(B).  \label{R-K}
\end{equation}

By the Rellich--Kondrachov theorem, \eqref{R-K} leads to a compact
embedding.  On the other hand, simple examples such as $p(D)=D_{1}$ show
that the compactness corresponding to the embedding \eqref{Hor0} can fail in
general.

Our second main goal is to characterize the situations when the inequality in Theorem~\ref{thm:main_ineq} leads to a compact embedding, as explained above. Our
characterization is based on another result of H\"{o}rmander (see Theorem \ref{th.cH} below), which leads us to introduce another new notion:
\begin{definition}\label{def:cpt_dom}
   (a) Let $p,q : \mathbf R^d\to\mathbf C$ be polynomials. We say that $p$ \textbf{compactly dominates} $q$ (and write $q\prec _{c}p$) if $\tilde q(\xi)/ \tilde p(\xi)\to 0$ as $|\xi|\to\infty$.

   (b) Let $P : \mathbf R^d\to \mathcal M_{M\times N}(\mathbf C)$,   $Q : \mathbf R^d\to \mathcal M_{L\times N}(\mathbf C)$ be matrix polynomials. We write $QP^+=(q_{lj}/\Delta)_{l,j}$, where $q_{lj}$, $\Delta$ are polynomials. We say that $P$ \textbf{compactly dominates} $Q$ (and write $Q\prec _{c}P$)if
    \begin{enumerate}
        \item[(i')] $\Delta$ compactly dominates $q_{lj}$ for all $l,j$ and
        \item [(ii)] $\ker P(\xi)\subset \ker Q(\xi)$ for almost every $\xi\in \mathbb R^d$.
    \end{enumerate}
\end{definition}
We will show in Remark \ref{rmk:consistency} that compact domination indeed implies domination, so that the terminology is consistent.

We can now state our main compactness result:
\begin{theorem}
\label{thm:main_compact}  Let $P : \mathbf{R}^d\to \mathcal{M}_{M\times N}(%
\mathbf{C})$, $Q : \mathbf{R}^d\to \mathcal{M}_{L\times N}(\mathbf{C})$ be
matrix polynomials.  The following are equivalent: 

\begin{enumerate}
\item For any sequence $(u_{n})_{n\geq 1}$ in $C_{c}^{\infty }(\Omega )^{N}$
such that $\sup_{n\geq 1}\left\Vert P(D)u_{n}\right\Vert _{L}<\infty $,
there exists a subsequence $(u_{n_{k}})_{k\geq 1}$ such that $Q(D)u_{n_{k}}$
converges strongly in $L^{2}$.

\item $P$ compactly dominates $Q$.
\end{enumerate}
\end{theorem}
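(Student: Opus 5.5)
We will follow the same scheme as the proof of Theorem~\ref{thm:main_ineq}, with H\"ormander's scalar compactness theorem (Theorem~\ref{th.cH}) playing the role of the scalar estimate \eqref{Hor0.5}. We take Theorem~\ref{th.cH} in the form: for scalar polynomials $p,q$, the map $p(D)u\mapsto q(D)u$ on $C_c^\infty(\Omega)$ is compact in $L^2$ if and only if $q\prec_c p$.

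\emph{Sufficiency, (2)$\Rightarrow$(1).} The key algebraic identity is $Q=QP^+P$. It holds pointwise wherever $\ker P(\xi)\subset\ker Q(\xi)$, because $P^+P$ is the orthogonal projection onto $(\ker P)^\perp$. By (ii) and continuity it then holds everywhere off the zero set of $\Delta$. Clearing denominators gives the polynomial identity
\[
\Delta(\xi)Q(\xi)=\sum_j q_{\cdot j}(\xi)P_{j\cdot}(\xi).
\]
Hence for $u\in C_c^\infty(\Omega)^N$ and $f=P(D)u$,
\[
\Delta(D)Q(D)u=\sum_j q_{\cdot j}(D)f_j .
\]
We then argue componentwise. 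Set $w=Q(D)u$, which lies in $C_c^\infty(\Omega)^L$. Each component satisfies $\Delta(D)w_l=\sum_j q_{lj}(D)f_j$.

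We cannot simply invert $\Delta(D)$, because $f_j$ is not of the form $\Delta(D)v$ with $v$ compactly supported. This is exactly where the duality argument of Theorem~\ref{thm:main_ineq} enters, and I would reuse it. For $\varphi\in C_c^\infty(\Omega)$ we have
\[
\langle w_l,\varphi\rangle=\langle w_l,\Delta(D)^*\psi\rangle=\sum_j\langle f_j,q_{lj}(D)^*\psi\rangle,
\]
where $\psi$ solves $\Delta(D)^*\psi=\varphi$ on a neighbourhood of $\overline\Omega$. H\"ormander's construction supplies such a $\psi$ with bounds in terms of $\tilde{\Delta}$. The compact version of this step should then read: the operator $\varphi\mapsto q_{lj}(D)^*\psi$ is compact on $L^2$ when $q_{lj}\prec_c\Delta$. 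This is the dual statement of Theorem~\ref{th.cH}, via Schauder's theorem.

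Granting that, $w$ is obtained from the bounded sequence $f$ by an operator that is the adjoint of a compact operator, hence compact. It follows that $Q(D)u_n$ is precompact in $L^2$.

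\emph{Necessity, (1)$\Rightarrow$(2).} First, compactness implies the continuous inequality by the usual contradiction argument. Suppose $\|Q(D)u_n\|=1$ and $\|P(D)u_n\|\to0$; then any limit of $Q(D)u_n$ must be $0$, a contradiction. So Theorem~\ref{thm:main_ineq} gives $Q\prec P$, and in particular (ii).

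For (i') we test with vector fields adapted to the columns of $P^+$. Take $u=P^+(D)$-type fields: choose $v$ scalar and set $u=\mathrm{adj}$-type combinations, for example $u=R(D)e_j\,v$ where $R=\Delta P^+$ is a polynomial matrix. Then
\[
P(D)u=\Delta(D)\,(PP^+)(D)e_jv,\qquad Q(D)u=q_{\cdot j}(D)v .
\]
Since $PP^+$ has bounded symbol, the compactness hypothesis transfers to the scalar embedding $\Delta(D)v\mapsto q_{lj}(D)v$, up to handling the factor $(PP^+)(D)$. Theorem~\ref{th.cH} then yields $q_{lj}\prec_c\Delta$.

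\emph{Main obstacle.} The delicate point, in both directions, is that $PP^+$ and $P^+$ are rational rather than polynomial. In the necessity part, one must check that the composite scalar estimate really forces $\tilde q_{lj}/\tilde\Delta\to0$. I would first try to handle this exactly as in the proof of Theorem~\ref{thm:main_ineq}, using Remark~\ref{rem.inv} to pick a convenient representation with $\Delta$ minimal. Then I would localise H\"ormander's necessity argument: test functions $e^{ix\cdot\xi}\chi(x)$ with $|\xi|\to\infty$, for which the rational factors act approximately as multiplication by their values at $\xi$.
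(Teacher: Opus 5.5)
Your necessity argument is the paper's (Proposition~\ref{prop.4}). With $u=A(D)e_jv$ you have $Q(D)u=q_{\cdot j}(D)v$. Since $PA=\Delta PP^+$ a.e.\ and $PP^+$ is an orthogonal projection, Plancherel gives the exact bound $\|P(D)u\|_{L^2}\le\|\Delta(D)v\|_{L^2}$. The scalar converse (Theorem~\ref{Hor2.2c}) then yields $q_{lj}\prec_c\Delta$ directly. So the ``main obstacle'' you describe is not one: only a norm bound on $P(D)u$ is needed, and no localisation with $e^{ix\cdot\xi}\chi$ is required.

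Two small repairs are needed.
\begin{enumerate}
\item ``Any limit of $Q(D)u_n$ must be $0$'' is not justified as written. Instead, scaling $u\mapsto tu$ shows $P(D)u=0\Rightarrow Q(D)u=0$, which gives (ii) by Remark~\ref{rem.Ker}. Relatively compact sets are bounded, which then gives $\|Q(D)u\|\le C\|P(D)u\|$.
\item $P^+$ is discontinuous where the rank of $P$ drops, so you cannot argue ``by continuity off $\{\Delta=0\}$''. What you need is the polynomial identity $\Delta Q=(QA)P$, which holds a.e.\ and hence everywhere.
\end{enumerate}

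For sufficiency you take a genuinely different route. It works, but your justification of the key step is wrong. Dualising the scalar compactness statement via Schauder gives a compact solution operator for $\Delta^*(D)h=q_{lj}^*(D)g$ on $\Omega$. That is not your operator $T\colon\varphi\mapsto q_{lj}^*(D)\psi$.

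What actually proves compactness of $T$ (for $q_{lj}\neq0$) is H\"ormander's regular fundamental solution $E$ of $\Delta^*(D)$.
\begin{enumerate}
\item Take $\chi\in C_c^\infty$ equal to $1$ near $\bar\Omega$ and set $\psi:=\chi(E*\varphi)$. This is linear in $\varphi$, satisfies $\Delta^*(D)\psi=\varphi$ near $\bar\Omega$, and obeys $\|\psi\|_{B_{2,\tilde\Delta}}\lesssim\|\varphi\|_{L^2}$.
\item Hence $q_{lj}^*(D)\psi$ is bounded in $B_{2,\tilde\Delta/\tilde q_{lj}}\cap\mathcal E'(\mathrm{supp}\,\chi)$.
\item Theorem~\ref{th.cH} with $k_1=\tilde\Delta/\tilde q_{lj}$ and $k_2=1$ gives compactness of $T$ into $L^2$.
\end{enumerate}
Schauder is then used only to pass from $T$ to $T^*$.

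The paper (Proposition~\ref{prop.3}) handles the obstruction you identified — that $f_j$ is not $\Delta(D)$ of a compactly supported function — without any solution operator. Theorem~\ref{th.cH} with $k_1=1$ and $k_2=\tilde q_{lj}/\tilde\Delta$ makes the nonlocal multiplier $(q_{lj}/\tilde\Delta)(D)$ compact on $L^2$ functions supported in $B$. Hence $(\Delta/\tilde\Delta)(D)Q(D)u_n=(QA/\tilde\Delta)(D)P(D)u_n$ is precompact. Since $Q(D)u_n$ is itself compactly supported, the estimate $\|w\|_{L^2}\lesssim\|(\Delta/\tilde\Delta)(D)w\|_{L^2}$ for $w\in\mathcal E'(B)$ (H\"ormander's Theorem 10.3.2) transfers precompactness back to $Q(D)u_n$.

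Your version mirrors the duality structure of Proposition~\ref{prop.1}, at the cost of invoking fundamental-solution regularity. The paper's version stays entirely at the level of Fourier multipliers on compactly supported functions.
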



\subsection{Application to lower semicontinuity of variational integrals}

The original motivation for the present work stems from the study of lower
semicontinuity properties of integral functionals generalizing \cite{Morrey}%
, 
\begin{equation*}
\int_{\Omega} F(P(D)u(x))d x, 
\end{equation*}
which are also related to the general $A$-free framework discussed in \cite%
{FM99,R_pot}, leading to variations of the notion of quasiconvexity of the
integrand $F$. Except in a few instances \cite{Muller99,DLSz}, the majority
of results in this direction pertain to so-called \textit{constant rank}
operators $P(D)$ and attempts at a general theory seem to be very
challenging, see the remarks in \cite[Section 7]{KrRa}.

Here we will make some progress to this question under the stringent
technical assumption that $P$ compactly dominates all lower order operators $%
P^{(\alpha)}$, defined as $P^{(\alpha)}(\xi)=\partial^{\alpha}P(\xi)$. With
this notation, our result is as follows:

\begin{theorem}
\label{thm:main_lsc} Let $P:\mathbf{R}^{d}\rightarrow 
\mathcal{M}_{M\times N}(\mathbf{C})$ be a matrix polynomial such that $P$
compactly dominates $P^{(\alpha )}$, for any multiindex $\alpha \neq 0$. Let 
$F:\mathbf{C}^{M}\rightarrow \mathbf{R}$, with $|F(z)|\lesssim 1+|z|^{2}$
satisfy 
\begin{equation}
\int_{(0,1)^{d}}(F(z+P(D)u(x))-F(z))dx\geq 0,\quad \text{for all }z\in 
\mathbf{C}^{N},\,u\in C_{c}^{\infty }((0,1)^{d})^{N}. 
 \label{qc}
\end{equation}%
Suppose $(u_{n})_{n\geq 1}$ is a sequence in $C_{c}^{\infty }(B)^{N}$
such that $P(D)u_{n}\rightarrow P(D)u$ weakly in $L^{2}(\Omega )^{M}$ for
some $u\in \mathscr D^{\prime }(\Omega)^N$. Then 
\begin{equation*}
\liminf_{n\rightarrow \infty }\int_{B}F(P(D)u_{n}(x))dx\geq
\int_{B}F(P(D)u(x))dx.
\end{equation*}
\end{theorem}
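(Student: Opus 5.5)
The plan is to reduce to a weak lower semicontinuity statement for $P$-free-type sequences via a blow-up / Young measure argument in the spirit of Fonseca--Müller, using Theorem~\ref{thm:main_compact} to kill all lower order terms. Set $v_n=u_n-u$ (after localizing). Since $P(D)v_n\rightharpoonup 0$, the sequence $P(D)v_n$ is bounded in $L^2$. Fix a cut-off $\varphi\in C_c^\infty$. The Leibniz formula for constant coefficient operators gives
\begin{equation*}
P(D)(\varphi v_n)=\varphi P(D)v_n+\sum_{\alpha\neq 0}\frac{1}{\alpha!}D^\alpha\varphi\, P^{(\alpha)}(D)v_n .
\end{equation*}
By hypothesis $P^{(\alpha)}\prec_c P$ for all $\alpha\neq0$. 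Then Theorem~\ref{thm:main_compact} (applied on a bounded set containing the supports) shows that each $P^{(\alpha)}(D)v_n$ is precompact in $L^2$. It converges to $0$ because it converges weakly to $0$; the weak limit is identified via distributions, noting $v_n\to 0$ in $\mathscr D'$ modulo $\ker P(D)$ and using condition (ii). Hence the commutator terms vanish strongly. So localization commutes with $P(D)$ up to $o(1)$ in $L^2$, which is the substitute for the constant rank property.

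Next I would generate the Young measure $\nu_x$ of $P(D)u_n$. By the growth bound the functional converges to $\int\langle\nu_x,F\rangle$ after passing to the equi-integrable part; the concentrating part only helps if $F$ has a quadratic lower bound, and otherwise I would use decomposition lemma arguments. So it suffices to prove the Jensen inequality $\langle \nu_{x_0},F\rangle\geq F(P(D)u(x_0))$ for a.e.\ $x_0$. I would blow up at a Lebesgue point $x_0$: rescale $w_{n,r}(y)=r^{-k}v_n(x_0+ry)$ on the unit cube, with $k$ the top homogeneity. The main obstacle is that $P$ is not homogeneous, so rescaling changes the operator. I would instead avoid blow-ups by working directly on small cubes $Q_r(x_0)$ with cut-offs $\varphi_r$. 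There the commutator estimate above, which has $r$-dependent constants, is used for fixed $r$ as $n\to\infty$, and the diagonal in $(r,n)$ is taken afterwards.

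On each cube, $\varphi_r v_n$ is compactly supported, so after translation and dilation of the cube to $(0,1)^d$ it is an admissible test field. Here I must check that \eqref{qc} is invariant under the dilations. That needs care for the non-homogeneous $P$, and I would verify it via periodization and Fourier series. Once this is done, applying \eqref{qc} with $z=P(D)u(x_0)$ gives $\int F(z+P(D)(\varphi_r v_n))\geq |Q_r|F(z)+o(1)$. The $L^2$-Lipschitz bound $|F(a)-F(b)|\lesssim(1+|a|+|b|)|a-b|$, which holds for such integrands (separately convex-type bounds), replaces $P(D)(\varphi_r v_n)$ by $P(D)v_n$ on $Q_r$ up to boundary-layer errors. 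Summing over a fine cube partition and letting $r\to0$ with equi-integrability yields the liminf inequality. I expect the dilation invariance and the boundary layer control to be the hardest steps.
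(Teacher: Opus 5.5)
Your final strategy has the same skeleton as the paper's proof: partition into small cubes, freeze $P(D)u$ on each cube, apply \eqref{qc} to the cut-off field $\varphi_r v_n$, and kill the commutator $\sum_{\alpha\neq0}\frac{1}{\alpha!}D^\alpha\varphi_r\,P^{(\alpha)}(D)v_n$ by compact domination, as in Corollary~\ref{cor.conv0}. The genuine gap is the step you defer to ``boundary-layer errors'' and ``equi-integrability''.

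\textbf{Where it fails.} Once the commutator is gone, the error on each cube is bounded by $\int_{\{\varphi_r\neq1\}}(1+|z|^2+|P(D)v_n|^2)\,dx$. Summed over the cubes, these layers form a set like the paper's $\Omega\setminus\Omega^m_\delta$. Nothing in your plan makes $\int_{\Omega\setminus\Omega^m_\delta}|P(D)v_n|^2$ small:
\begin{itemize}
\item $|P(D)v_n|^2$ is only bounded in $L^1$, so it can concentrate on the faces of a fixed partition.
\item $F$ is signed, so such concentration cannot simply be discarded.
\item There is no equi-integrability available. The decomposition lemmas for $\mathcal{A}$-free sequences are built on the constant-rank projection, which is exactly what is missing here.
\item The Young measure $\nu_x$ does not see concentration, so a Jensen inequality for $\nu_{x_0}$ would not give the claim for signed $F$ anyway.
\end{itemize}
Concentration in the interior of the cubes is harmless, because it is carried by the test field $\varphi_r v_n$ to which \eqref{qc} is applied. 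Only the faces are a problem.

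\textbf{The missing idea.} You must choose the partition depending on the sequence. In Lemma~\ref{lem.Q} the paper does this in two steps. First it fixes the scale $m$ so that the cube averages $\mathbf E_m P(D)u$ are $\varepsilon$-close to $P(D)u$ in $L^2$ for all grid translations in a small ball. Then it takes $M\approx dC^2/\varepsilon^2$ translates of the grid whose boundary layers overlap at most $d$ times. By pigeonhole, one translate satisfies $\liminf_n\|P(D)v_n\|_{L^2(\Omega\setminus\Omega^m_\delta)}<\varepsilon$, and one works along that subsequence.

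An equivalent route: pass to a subsequence with $|P(D)v_n|^2\,dx\overset{*}{\rightharpoonup}\mu$. Translate the grid so that its faces avoid the at most countably many coordinate hyperplanes charged by $\mu$. Then $\limsup_n\int_{\Omega\setminus\Omega^m_\delta}|P(D)v_n|^2\le\mu(\overline{\Omega\setminus\Omega^m_\delta})\to0$ as $\delta\to0$.

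\textbf{Smaller corrections.} Freeze with cube averages rather than point values $P(D)u(x_0)$, so that the $L^2$-Lipschitz bound on $F$ controls the freezing error globally.

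The dilation invariance of \eqref{qc} that you flag as a main difficulty is not needed. Dilation replaces $P(\xi)$ by $P(\xi/r)$, so for non-homogeneous $P$ you should not expect such invariance. Instead, once the cubes have side at most $1$, a field compactly supported in one of them can simply be translated into $(0,1)^d$. This works because the integrand in \eqref{qc} vanishes off its support.
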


The boundary condition imposed in the lower semicontinuity statement about
can be replaced with typical less restrictive assumptions, provided that
concentration at the boundary is ruled out. For instance, one can consider
distributions $u_n\in \mathscr D^{\prime }(\Omega)^N$ such that $P(D)u_{n}\rightarrow
v$ weakly in $L^{2}(B)^M$, provided that the integrals are taken over
sets $\omega\Subset B$ or if $F$ is assumed bounded from below.

The \textit{quasiconvexity} condition \eqref{qc} is necessary and
sufficient for the lower semicontinuity claim. However, in the absence of
the technical condition that $P$ compactly dominates $P^{(\alpha)}$ for all $%
\alpha$, it is unclear if \eqref{qc} is sufficient for lower
semicontinuity \cite[Section 5]{HNR}. This question is particularly
challenging in the absence of the so-called constant rank condition of $P$ 
\cite{Murat}, in which case \eqref{qc} is an $A$-quasiconvexity condition 
\cite{FM99}. Simple examples show that this is not the case for general
linear differential operators $P$. 

\section*{Acknowledgements}

The first author was supported by the National Science Centre, Poland, CEUS programme, project no. 2020/02/Y ST1/00072.

\section{Proofs of estimates and compactness}

\subsection{Definitions and old results}
\label{sec.def}

First we introduce some notation. All functions and distributions are
complex-valued. The open unit ball in $\mathbf{R}^d$ is $B$. We denote the
Moore--Penrose generalized inverse of a matrix $A\in \mathcal{M}%
_{M\times N}(\mathbf{C})$ by $A^+\in \mathcal{M}_{N\times M}(\mathbf{C})$ 
(\cite{Moore}), and refer to it as the \textit{pseudoinverse of $A$}. The
crucial properties used here are that $AA^+$ and $A^+A$ are the orthogonal
projections on $\mathrm{im\,}A$ and $\mathrm{im\,}A^*$ respectively,
together with the algebraic identities 
\begin{equation*}
AA^+A=A\quad\text{and}\quad A^+AA^+=A^+. 
\end{equation*}
Here $A^*$ denotes the conjugate transpose of $A$.

The Fourier transform of a Schwartz function on $\mathbf{R}^d$ is given by
\begin{equation*}
\mathcal{F }f(\xi)=\hat f(\xi)=\int_{\mathbf{R}^d} e^{-i \xi\cdot
x}f(x) dx\quad\text{for }f\in\mathscr S(\mathbf{R}^d)^N. 
\end{equation*}
The definition of the Fourier transform extends by duality to the space $%
\mathscr S^{\prime }(\mathbf{R}^d)^N$ of tempered distributions. This
contains the space $\mathcal{E}^{\prime }(\Omega)^{N}$ of distributions on $\mathbf R^d$ which have 
compact support contained in $\bar\Omega$, where $\Omega\subset\mathbf{R}^d$
is an open set.

Given a matrix polynomial $P\colon\mathbf{R}^d\to\mathcal{M}_{M\times N}(%
\mathbf{C})$, the differential operator $P(D)$ is defined using $D=-i
\nabla$ (the real partial derivatives $\partial_1,\partial_2,\ldots,%
\partial_d$ are regarded as polynomial variables, as partial differentiation
is a commutative operation). In particular, if $P(\xi)=\sum_{|\alpha|\leq k}
\xi^\alpha P_\alpha$, where $P_\alpha\in \mathcal{M}_{M\times N}(\mathbf{C})$%
, then 
\begin{equation*}
P(D)u=\sum_{|\alpha|\leq k} P_\alpha D^\alpha u=\sum_{|\alpha|\leq k}
(-i)^\alpha P_\alpha \partial^\alpha u\quad\text{for }u\in C^\infty(\mathbf{R%
}^d)^N. 
\end{equation*}
With this definition we have that $\mathcal{F}(P(D)u)(\xi)=P(\xi)\mathcal{F }%
u(\xi)$ for $u\in\mathscr S(\mathbf{R}^d)^N$.

Following \cite[Chapter X]{Hor2}, we introduce the class $\mathcal{K}$,
consisting of \textit{tempered weights}, i.e., functions $k:\mathbf{R}%
^{d}\rightarrow (0,\infty )$ such that there exist constants $C,a>0$ such
that 
\begin{equation*}
k(\xi +\eta )\leq (1+C|\xi |^{a})k(\eta ),\quad \text{for all }\xi ,\eta \in 
\mathbf{R}^{d}.
\end{equation*}

Also, as in \cite[Def. 10.1.6]{Hor2}, for any $p\in \lbrack 1,\infty )$ and
any $k\in \mathcal{K}$ we define the space $B_{p,k}(\mathbf{R}^{d})$ of all
the tempered distributions $u$ on $\mathbf{R}^{d}$ for which  
\begin{equation*}
\left\Vert u\right\Vert _{B_{p,k}}:=\left( \int_{\mathbf{R}^{d}}|\widehat{u}%
(\xi )k(\xi )|^{p}d\xi \right) ^{1/p}<\infty .
\end{equation*}

According to Theorem 10.1.7 in \cite{Hor2} the space $B_{p,k}(\mathbf{R}^{d})
$ is a Banach space and $C_{c}^{\infty }(\mathbf{R}^{d})$ is dense in $%
B_{p,k}(\mathbf{R}^{d})$. Note that, by Parseval's theorem we have $B_{2,1}(%
\mathbf{R}^{d})=L^{2}(\mathbf{R}^{d})$ (and, if $p\neq 2$, then $B_{p,1}(%
\mathbf{R}^{d})$ and $L^{p}(\mathbf{R}^{d})$ are different spaces). In what
follows, since we are interested only in $L^{2}$-estimates, we mainly work
with the spaces $B_{2,k}(\mathbf{R}^{d})$. However, in the proofs of the
main results it is convenient to use general tempered weights $k$ rather
than just the constant ones.

We also recall some notation from \cite[Chapter II]{Hor}. Suppose $p: 
\mathbf{R}^{d}\rightarrow \mathbf{C}$ is a polynomial function ($p$ is a
scalar polynomial with complex coefficients). We associate to $p$ the
function $\widetilde{p}:\mathbf{R}^{d}\rightarrow \lbrack 0,\infty )$
defined by 
\begin{equation*}
\widetilde{p}(\xi ):=\left( \sum_{\alpha \in \mathbf{N}^{d}}\left\vert
p^{(\alpha )}(\xi )\right\vert ^{2}\right) ^{\frac{1}{2}},\quad\text{for all 
$\xi \in \mathbf{R}^{d}$,}
\end{equation*}
where $p^{(\alpha )}:=\partial ^{\alpha }p$ for any multi-index $\alpha \in 
\mathbf{N}^{d}$.

Given two polynomial functions $p,q:\mathbf{R}^{d}\rightarrow \mathbf{C}$,
we say that $p$ \textit{dominates} $q$ (or that $q$ is weaker\footnote{%
Actually, in \cite[Chapter II, Section 2.3]{Hor} (or \cite[Definition
10.3.4, p. 30]{Hor2}) it is said that $q$ is weaker than $p$ if (\ref{Hor1})
holds.} than $p$), if there exists a constant $C>0$ such that 
\begin{equation*}
\widetilde{q}(\xi )\leq C\widetilde{p}(\xi ),\quad\text{for all $\xi \in 
\mathbf{R}^{d}$.}
\end{equation*}
In this case we write $q\prec p$.

We recall the following result of H\"{o}rmander (see \cite[Theorem 2.2, p.
179]{Hor}):

\begin{theorem}
\label{Hor2.2}Suppose $p,q:\mathbf{R}^{d}\rightarrow \mathbf{C}$ are two
polynomial functions. There exists a constant $C>0$ such that 
\begin{equation}
\left\Vert q(D)u\right\Vert _{L^{2}}\leq C\left\Vert p(D)u\right\Vert
_{L^{2}},\quad\text{for all $u\in C_{c}^{\infty }(B)$, }  \label{Hor1}
\end{equation}
if and only if $p$ dominates $q$.
\end{theorem}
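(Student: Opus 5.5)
Since Theorem~\ref{Hor2.2} is quoted from H\"ormander, I will only outline the classical argument. It has two halves: sufficiency via a fundamental solution with good weighted bounds, and necessity via testing on modulated bump functions.

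\emph{Sufficiency} ($q\prec p \Rightarrow$ \eqref{Hor1}). I will construct a fundamental solution $E\in\mathscr D'(\mathbf R^d)$ of $p(D)$ that, after localization, acts boundedly between $B_{2,\tilde p}$-type spaces. Concretely, for $\chi\in C_c^\infty$ equal to $1$ near $\bar B-\bar B$, convolution with $\chi E$ should map $B_{2,k}\to B_{2,k\tilde p}$ for every $k\in\mathcal K$, with norm controlled by a constant depending only on $p$, $d$ and $\chi$.

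The construction is H\"ormander's: integrate $\hat f(\xi+i\eta)/p(\xi+i\eta)$ over complex shifts $\eta$ chosen so that $|p(\xi+i\eta)|\gtrsim\tilde p(\xi)$ on average. This uses the standard lemma that, for $|\eta|\le 1$ and away from a small set, $|p(\zeta)|$ is bounded below by $\tilde p$ up to a constant, together with Taylor expansion, $\tilde p(\xi+\eta)\le C\tilde p(\xi)$, and the fact that $\tilde p\in\mathcal K$.

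With $E$ in hand, for $u\in C_c^\infty(B)$ I write $u=(\chi E)*p(D)u$ on $B$, since $u$ and $p(D)u$ are supported in $B$. Taking $k=1$ gives
\begin{equation*}
\int|\hat u|^2\tilde p^2\,d\xi\le C\|p(D)u\|_{L^2}^2 .
\end{equation*}
Since $|q(\xi)|\le\tilde q(\xi)\le C\tilde p(\xi)$, Plancherel then yields $\|q(D)u\|_{L^2}\le C\|p(D)u\|_{L^2}$. I expect this construction of $E$ with bounds uniform in $\tilde p$ to be the main obstacle: the averaging over complex shifts, and the lower bound for $|p|$ on most of the unit polydisc, are the delicate points.

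\emph{Necessity}. Fix $\phi\in C_c^\infty(B)$ and test \eqref{Hor1} on the family $u_\xi=e^{ix\cdot\xi}\phi$ and on the derived functions $x^\beta e^{ix\cdot\xi}\phi$. By Leibniz and Taylor,
\begin{equation*}
p(D)(e^{ix\cdot\xi}\phi)=e^{ix\cdot\xi}\sum_\alpha p^{(\alpha)}(\xi)D^\alpha\phi/\alpha!,
\end{equation*}
so the right-hand side is at most $C\tilde p(\xi)$.

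For the left-hand side, the $L^2$ norm of $\sum_\alpha q^{(\alpha)}(\xi)D^\alpha\phi/\alpha!$ is bounded below by $c\,\tilde q(\xi)$, because the functions $D^\alpha\phi$, $|\alpha|\le\deg q$, are linearly independent for a nonzero compactly supported $\phi$. By equivalence of norms on a finite-dimensional space, this gives
\begin{equation*}
\Bigl\|\sum_\alpha c_\alpha D^\alpha\phi\Bigr\|_{L^2}\ge c\Bigl(\sum|c_\alpha|^2\Bigr)^{1/2}.
\end{equation*}
Hence $\tilde q(\xi)\le C\tilde p(\xi)$ for all $\xi$, which is $q\prec p$.
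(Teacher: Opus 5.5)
The paper gives no proof of this statement. It is quoted from H\"ormander and used as a black box, so your outline is best read as a reconstruction of the classical argument. Your necessity half is correct, complete, and is the classical proof. The identity $p(D)(e^{ix\cdot\xi}\phi)=e^{ix\cdot\xi}p(\xi+D)\phi$ gives the upper bound $C\tilde p(\xi)$. The lower bound $c\,\tilde q(\xi)$ holds because $\sum_\alpha c_\alpha D^\alpha\phi=0$ means $r\hat\phi\equiv0$ for $r(\xi)=\sum_\alpha c_\alpha\xi^\alpha$, and this forces $r=0$ since $\hat\phi$ is a nonzero entire function. The test functions $x^\beta e^{ix\cdot\xi}\phi$ are never used and can be dropped.

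In the sufficiency half, the reduction to $\int\tilde p^2|\hat u|^2\,d\xi\le C\|p(D)u\|_{L^2}^2$ is right, but two steps need repair.

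First, $u=(\chi E)*p(D)u$ holds only on $B$, which by itself says nothing about the global integral $\int\tilde p^2|\hat u|^2$. Take $\psi\in C_c^\infty$ with $\psi=1$ on $B$, and choose $\chi=1$ near $\mathrm{supp}\,\psi-\bar B$. Then $u=\psi\,\bigl((\chi E)*p(D)u\bigr)$ on all of $\mathbf R^d$. Multiplication by $\psi$ is bounded on $B_{2,\tilde p}$ since $\tilde p\in\mathcal K$; this is the multiplier fact also used in the proof of Proposition~\ref{prop.1}.

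Second, the existence of $E$ with $\chi E\in B_{\infty,\tilde p}$ is the entire difficulty, and your description of it fails if read literally. Averaging $\hat f(\xi+i\eta)/p(\xi+i\eta)$ over a set of shifts needs $1/|p(\xi+i\eta)|$ to be integrable in $\eta$. Knowing $|p|\gtrsim\tilde p$ off a small set does not provide this: for $p(\xi)=\xi_1^2$ and $\xi_1=0$ one gets $\int_{|\eta|\le1}\eta_1^{-2}\,d\eta=\infty$. H\"ormander instead integrates over contours chosen, depending on $\xi$, at a fixed distance from the zeros, or uses circle means in one complex direction. Citing his theorem on fundamental solutions in $B^{\mathrm{loc}}_{\infty,\tilde p}$ closes the argument, but the substance is then outsourced.

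There is a much shorter self-contained route that needs no fundamental solution. With $p^{(j)}=\partial_{\xi_j}p$ one has $p(D)(x_ju)-x_j\,p(D)u=-i\,p^{(j)}(D)u$. Write $\|p^{(j)}(D)u\|^2=\bigl(p^{(j)}(D)u,\;i\,p(D)(x_ju)-i\,x_j\,p(D)u\bigr)$. Move $p(D)$ across by Plancherel, i.e.\ $\bigl(p^{(j)}(D)u,p(D)(x_ju)\bigr)=\bigl(\bar p(D)u,(\bar p)^{(j)}(D)(x_ju)\bigr)$, where $\bar p$ is the polynomial with conjugated coefficients and $\|\bar p(D)v\|_{L^2}=\|p(D)v\|_{L^2}$. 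Then apply the commutator identity once more to $(\bar p)^{(j)}(D)(x_ju)$. Since $|x_j|\le1$ on $\mathrm{supp}\,u$, this gives
\[
\|p^{(j)}(D)u\|^2\le 2\,\|p^{(j)}(D)u\|\,\|p(D)u\|+\|p(D)u\|\,\|p^{(jj)}(D)u\|.
\]
Induction on $\deg p$ yields $\|p^{(\alpha)}(D)u\|\le C\|p(D)u\|$ for all $\alpha$, with $C$ depending only on $\deg p$. Summing over $\alpha$ gives exactly your weighted bound, and the conclusion follows from $|q|\le\tilde q\le C\tilde p$ as you wrote.
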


In order to formulate a version of Theorem \ref{Hor2.2} adapted to the case of
matrix valued polynomials (Theorem \ref{thm:main_ineq}), we  need a more general notion of domination. Fix some $N\in 
\mathbf{N}^{\ast }$ and let $P:\mathbf{R}^{d}\rightarrow\mathcal{M}_{M\times
N}(\mathbf{C)}$, $Q:\mathbf{R}^{d}\rightarrow\mathcal{M}_{L\times N}(\mathbf{%
C)}$ be two polynomial functions such that $P$ is not identically $0$. Note
that there exists a matrix valued rational function that equals $P^{+}(\xi )$
a.e. on $\mathbf{R}^{d}$. The fact that the (pointwise)
pseudoinverse of a matrix polynomial is a rational function  follows
immediately from the formula given in Theorem 3 of \cite{Dec}. We can write 
\begin{equation}
P^{+}(\xi )=\frac{1}{\Delta (\xi )}A(\xi ), \quad\text{a.e. on $\mathbf{R}%
^{d}$,}  \label{P+A}
\end{equation}%
where $A:\mathbf{R}^{d}\rightarrow \mathcal{M}_{N\times M}(\mathbf{C)}$ is a
matrix valued polynomial and $\Delta :\mathbf{R}^{d}\rightarrow \mathbf{C}$
is a nonzero scalar polynomial. Hence, we can write 
\begin{equation}
Q(\xi )P^{+}(\xi )=\frac{1}{\Delta (\xi )}Q(\xi )A(\xi )=\left( \frac{%
q_{lj}(\xi )}{\Delta (\xi )}\right) _{l,j},\quad\text{a.e. on $\mathbf{R}^{d}
$,}  \label{QP+}
\end{equation}
where $q_{lj}:\mathbf{R}^{d}\rightarrow \mathbf{C}$ are scalar polynomials
for $l=1,\ldots,N$, $j=1,\ldots,M$.

We now recall from Definition
\ref{def:dom} that $Q\prec P$ if 
\begin{enumerate}
\item[(i)] $\Delta$ dominates $q_{lj}$ for all $l,j$ and 

\item[(ii)] $\ker P(\xi)\subset \ker Q(\xi)$ for almost every $\xi\in 
\mathbb{R}^d$. 
 \end{enumerate}
 In this case we say that $P$ \textit{dominates} $Q$.

\begin{remark}
\label{rem.inv}Note that the condition (i) does not depend on the chosen
representation for the entries of the rational matrix in (\ref{QP+}). More
generally, if we write 
\begin{equation*}
\frac{1}{\Delta (\xi )}Q(\xi )A(\xi )=\left( \frac{q_{lj}^{1}(\xi )}{%
p_{lj}^{1}(\xi )}\right) _{l,j=1,...,N},
\end{equation*}%
for some scalar polynomials $p_{lj},q_{lj}:\mathbf{R}^{d}\rightarrow \mathbf{%
C}$, then (i) (for a fixed pair $\Delta $, $A$) is satisfied if and only if $%
p_{lj}\prec q_{lj}$, for all $l=1,\ldots,N$, $j=1,\ldots,M$. This follows
from Theorem 10.4.1 in \cite{Hor2} and the discussion on p. 33 in \cite{Hor2}.
\end{remark}

\begin{remark}
\label{rem.Ker}By the definition of the pseudoinverse we have $P(P^{+}P-I)=0$%
. This and condition (ii) implies $QP^{+}P=Q$ and $P^{\ast }(P^{\ast
})^{+}Q^{\ast }=Q^{\ast }$, a.e. on $\mathbf{R}^{d}$. Moreover, condition
(ii) is equivalent with the following implication: If $P(D)u=0$ for $u\in
C^\infty_c(\mathbf{R}^d)^N$, then $Q(D)u=0$.
\end{remark}


We also introduce the ``algebraic'' version of compact domination. Given two polynomial functions $p,q:\mathbf{R}^{d}\rightarrow 
\mathbf{C}$, we say that $p$ \textit{compactly dominates} $q$ if there
exists a constant $C>0$ such that 
\begin{equation*}
\widetilde{q}(\xi )/\widetilde{p}(\xi )\rightarrow 0,
\end{equation*}%
when $|\xi |\rightarrow \infty $. In this case we write $q\prec _{c}p$.

As a consequence of \cite[Theorem 10.1.10]{Hor2} due to H\"{o}rmander (see Theorem \ref{th.cH} below) we have the following compactness result concerning the scalar case:
\begin{theorem}
\label{Hor2.2c}Suppose $p,q:\mathbf{R}^{d}\rightarrow \mathbf{C}$ are two
polynomial functions. If $q\prec _{c}p$ then, for any sequence $%
(u_{n})_{n\geq 1}$ in $C_{c}^{\infty }(B)^{N}$ with $\left\Vert
p(D)u_{n}\right\Vert _{L^{2}}\leq 1$, the sequence $(q(D)u_{n})_{n\geq 1}$
is relatively compact in $L^{2}$. Conversely, suppose that for any sequence $%
(u_{n})_{n\geq 1}$ in $C_{c}^{\infty }(B)^{N}$ with $\left\Vert
p(D)u_{n}\right\Vert _{L^{2}}\leq 1$, the sequence $(q(D)u_{n})_{n\geq 1}$
is relatively compact in $L^{2}$. Then, we have $q\prec _{c}p$.
\end{theorem}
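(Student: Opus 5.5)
The plan is to deduce both directions from H\"ormander's compactness theorem for $B_{2,k}$ spaces, \cite[Theorem 10.1.10]{Hor2} (Theorem \ref{th.cH} below). It says that if $k_1,k_2\in\mathcal K$ satisfy $k_2(\xi)/k_1(\xi)\to 0$ as $|\xi|\to\infty$, then the inclusion of $\{v\in B_{2,k_1}:\operatorname{supp}v\subset K\}$ into $B_{2,k_2}$ is compact for every compact $K$. It also contains the converse statement. The bridge is H\"ormander's comparison (\cite[Theorem 10.3.6 / Chapter II]{Hor}) for $u\in C_c^\infty(B)$:
\begin{equation*}
c\,\|u\|_{B_{2,\tilde p}}\le \|p(D)u\|_{L^2}\le \|u\|_{B_{2,\tilde p}},
\end{equation*}
with $c>0$ depending only on $p$ and $B$. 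The upper bound is trivial since $|p|\le\tilde p$. The lower bound is the estimate underlying Theorem \ref{Hor2.2}, obtained by applying $\|r(D)u\|\le C\|p(D)u\|$ to all derivatives $r=p^{(\alpha)}$ and summing. Recall also that $\tilde p\in\mathcal K$ by Taylor's formula. Since $u$ is vector valued but $p,q$ are scalar, we argue componentwise; it is enough to take $N=1$.

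\emph{Sufficiency.} Let $\|p(D)u_n\|_{L^2}\le 1$. The lower bound gives $\|u_n\|_{B_{2,\tilde p}}\le c^{-1}$, and each $u_n$ is supported in $\bar B$. Since $\tilde q/\tilde p\to 0$, the compactness theorem with $k_1=\tilde p$, $k_2=\tilde q$ gives a subsequence converging in $B_{2,\tilde q}$. Finally $\|q(D)v\|_{L^2}=\|q\hat v\|_{L^2}\le\|v\|_{B_{2,\tilde q}}$, so $q(D)u_{n_k}$ is Cauchy in $L^2$.

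\emph{Necessity.} Assume the compactness property but suppose $q\not\prec_c p$. Then there are $\varepsilon>0$ and $|\xi_n|\to\infty$ with $\tilde q(\xi_n)\ge\varepsilon\tilde p(\xi_n)$. Fix $\phi\in C_c^\infty(B)$ with $\|\phi\|_{L^2}=1$ and set $u_n=e^{ix\cdot\xi_n}\phi$, so that $\hat u_n=\hat\phi(\cdot-\xi_n)$. Taylor expansion around $\xi_n$ gives
\begin{equation*}
\|r(D)u_n\|_{L^2}^2=\int|r(\xi_n+\eta)|^2|\hat\phi(\eta)|^2\,d\eta,
\end{equation*}
and this is comparable to $\tilde r(\xi_n)^2$ up to constants depending on $\phi$. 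The upper bound follows from $|r(\xi_n+\eta)|\le\tilde r(\xi_n)(1+|\eta|)^m$. The lower bound needs a positive-definite quadratic form in the Taylor coefficients $r^{(\alpha)}(\xi_n)$: polynomials of bounded degree vanishing on the support of $\hat\phi$ (an open set) are zero, so by compactness the form is bounded below.

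The lower bound is the step I expect to need the most care, but it is standard; it is exactly H\"ormander's argument for the necessity half of Theorem \ref{Hor2.2}. We normalize $v_n=u_n/\tilde p(\xi_n)$, so that $\|p(D)v_n\|\lesssim 1$ and $\|q(D)v_n\|\gtrsim\varepsilon$. On the other hand $\widehat{q(D)v_n}$ concentrates near $\xi_n\to\infty$, so $q(D)v_n\rightharpoonup 0$ weakly: for any $g\in L^2$, $\langle \widehat{q(D)v_n},\hat g\rangle\to0$ by dominated tails. A subsequence converging strongly would therefore have to converge to $0$, contradicting $\|q(D)v_n\|\gtrsim\varepsilon$. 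Hence $q\prec_c p$. One point to check is that $\|p(D)v_n\|\le 1$ can be arranged by rescaling with a fixed constant.

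Alternatively, if Theorem \ref{th.cH} is stated as an equivalence, necessity can be read off it directly via the same two-sided comparison. The modulated-bump argument above is what I would write out, since it is self-contained.
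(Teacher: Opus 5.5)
Your proof is correct. The paper gives no separate argument for this statement. It presents it as an immediate consequence of Theorem~\ref{th.cH}, read in both directions through the comparison $\|p(D)u\|_{L^2}\sim\|u\|_{B_{2,\tilde p}}$ for $u$ supported in $\bar B$ (Theorem 10.3.2 in \cite{Hor2}); the converse is used the same way in the proof of Proposition~\ref{prop.4}. Your sufficiency half is exactly this route.

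For necessity your main argument differs. Instead of citing the converse half of Theorem~\ref{th.cH}, you re-prove it for polynomial weights using the modulated bumps $e^{ix\cdot\xi_n}\phi$ and the two-sided bound $\|r(D)(e^{ix\cdot\xi}\phi)\|_{L^2}\sim\tilde r(\xi)$; this is H\"ormander's own necessity argument for Theorem~\ref{Hor2.2}. What this buys is self-containedness. The citation route needs two extra steps:
\begin{itemize}
\item a mollification/density step, to upgrade a hypothesis stated only on $C_c^\infty(B)$ to a compact embedding on all of $B_{2,\tilde p}\cap\mathcal E'(K)$ for some compact $K$ with nonempty interior (e.g.\ $K=\overline{B(0,1/2)}$);
\item the lower bound $\|u\|_{B_{2,\tilde q}}\lesssim\|q(D)u\|_{L^2}$ on $\mathcal E'(K)$, to turn $L^2$-compactness of $q(D)u_n$ into compactness in $B_{2,\tilde q}$.
\end{itemize}
In your version both become explicit computations on a single family of test functions. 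The price is the positive-definiteness lemma for the Taylor coefficients, which you handle correctly.

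One small point: the tail argument for $q(D)v_n\rightharpoonup 0$ needs $\sup_n\|q(D)v_n\|_{L^2}<\infty$. Take this directly from the hypothesis, since relatively compact sets are bounded, rather than from the ratio $\tilde q(\xi_n)/\tilde p(\xi_n)$. Alternatively, test only against $h\in C_c^\infty$, where Riemann--Lebesgue gives $\langle q(D)v_n,h\rangle\to 0$ directly. That already forces the strong limit of any subsequence to be $0$.
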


Adapted to the case of matrix valued polynomials our notion of compact domination is the following, cf. Definition \ref{def:cpt_dom}(b).
 If we write $QP^{+}=(q_{lj}/\Delta )_{l,j}$ for matrix polynomials $P,Q$, we say that $P$ \textit{compactly dominates} $Q$
 if 

\begin{enumerate}
\item[(i')] $\Delta$ compactly dominates $q_{lj}$ for all $l,j$ and 

\item[(ii)] $\ker P(\xi)\subset \ker Q(\xi)$ for almost every $\xi\in 
\mathbb{R}^d$. 
\end{enumerate}
In this case we also  write $Q\prec _{c}P$.
With this extension of the notion of compact domination, Theorem \ref{thm:main_compact} is a generalization of Theorem \ref{Hor2.2c} above.

\begin{remark}\label{rmk:consistency}
Compact domination implies domination since $\tilde p>0$
everywhere unless $p = 0$ and similarly $\tilde\Delta>0$ everywhere unless $%
P=0$. Of course, if $p$ or $P$ are null, then either domination is only
possible in the trivial case when $q$ or $Q=0$.
\end{remark}

\subsection{$L^{2}$-estimates}

The main goal of this section is to establish Theorem \ref{thm:main_ineq}.
We proceed with the proof of sufficiency of domination.

\begin{proposition}
\label{prop.1}Suppose $P : \mathbf{R}^d\to \mathcal{M}_{M\times N}(\mathbf{C}%
)$, $Q : \mathbf{R}^d\to \mathcal{M}_{L\times N}(\mathbf{C})$ are matrix
polynomials. If $P$ dominates $Q$ then 
\begin{equation}
\left\Vert Q(D)u\right\Vert _{L^{2}}\leq C\left\Vert P(D)u\right\Vert
_{L^{2}},  \label{prop1-0}
\end{equation}%
for all $u\in C_{c}^{\infty }(B)^{N}$.
\end{proposition}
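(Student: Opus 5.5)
The idea is to factor $Q$ through $P$ on the Fourier side and reduce to the scalar result, Theorem~\ref{Hor2.2}. By condition (ii) and Remark~\ref{rem.Ker} we have $Q=QP^{+}P$ a.e., and by \eqref{QP+} $QP^+=(q_{lj}/\Delta)_{l,j}$. Hence for $u\in C_c^\infty(B)^N$, with $f:=P(D)u\in C_c^\infty(B)^M$,
\begin{equation*}
\Delta(\xi)\,\widehat{Q(D)u}(\xi)=\Delta(\xi)Q(\xi)P^+(\xi)P(\xi)\hat u(\xi)=\Big(\sum_j q_{lj}(\xi)\hat f_j(\xi)\Big)_l .
\end{equation*}
Setting $v:=Q(D)u\in C_c^\infty(B)^L$, this says $\Delta(D)v_l=\sum_j q_{lj}(D)f_j$ as compactly supported smooth functions. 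So the task is to bound $\|v_l\|_{L^2}$ by $\sum_j\|f_j\|_{L^2}$, given that $\Delta$ dominates each $q_{lj}$. The difficulty is that the $q_{lj}$ act on $f_j$, which is not in the image of $\Delta(D)$, so Theorem~\ref{Hor2.2} cannot be applied directly.

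I would handle this by duality. Fix $l$ and test against $\varphi\in C_c^\infty(B)$ (or a slightly larger ball). We have $\langle v_l,\Delta^*(D)\psi\rangle=\langle \Delta(D)v_l,\psi\rangle=\sum_j\langle f_j,q_{lj}^*(D)\psi\rangle$, where $p^*(\xi)=\overline{p(\xi)}$. Since $\widetilde{p^*}=\tilde p$, domination passes to adjoints, so $q_{lj}^*\prec\Delta^*$. Theorem~\ref{Hor2.2} then gives $\|q_{lj}^*(D)\psi\|\le C\|\Delta^*(D)\psi\|$ for $\psi\in C_c^\infty(B')$ with $B'\supset B$ a larger ball (rescale the theorem). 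Therefore
\begin{equation*}
|\langle v_l,\Delta^*(D)\psi\rangle|\le C\|f\|_{L^2}\|\Delta^*(D)\psi\|_{L^2}.
\end{equation*}
To conclude $\|v_l\|\le C\|f\|$, I need to represent $v_l$ (restricted to $B$) approximately as $\Delta^*(D)\psi$, i.e.\ I need $\{\Delta^*(D)\psi:\psi\in C_c^\infty(B')\}$ to be dense in $L^2(B)$ when tested against functions supported in $B$. Its annihilator in $L^2(B)$ consists of $w$ with $\Delta(D)w=0$ on $B'$ where $w$ is extended by zero. But $w$ has compact support, so $\Delta(\xi)\hat w(\xi)=0$ with $\hat w$ entire, which forces $w=0$. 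Density follows, and with it the bound.

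The main obstacle is the last step. Density alone gives $v_l\in\overline{\mathrm{range}}$ but not a quantitative bound: I need a sequence $\psi_k$ with $\Delta^*(D)\psi_k\to v_l$ in $L^2(B')$ while the pairing estimate stays uniform. The clean route is to choose $\psi$ solving $\Delta^*(D)\psi=v_l$ locally via Hörmander's fundamental solution in $B_{2,\tilde\Delta}$ spaces (Theorem 10.3.? in \cite{Hor2}). Specifically, take $\psi=\chi\,(E*v_l)$ with $E$ a fundamental solution of $\Delta^*(D)$ and $\chi$ a cutoff equal to $1$ on $B$. Then $\Delta^*(D)\psi=v_l+$ (terms supported in $B'\setminus B$), and these extra terms pair to zero against $v_l$ and the $f_j$, which are supported in $B$. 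Taking $\varphi=v_l$, we get $\|v_l\|^2=\sum_j\langle f_j,q_{lj}^*(D)\psi\rangle$. It then remains to show $\|q_{lj}^*(D)\psi\|_{L^2(B)}\lesssim\|v_l\|$. This is exactly Hörmander's estimate $\|\chi\, q(D)(E*g)\|\lesssim\|g\|$ for $q\prec\Delta$, which is the content underlying Theorem~\ref{Hor2.2} (Theorem 10.2.? and 10.3.? in \cite{Hor2}). This is where I expect the technical work to lie.
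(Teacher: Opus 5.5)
Your argument is correct, and it takes a genuinely different route from the paper's.

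The paper works entirely on the dual side:
\begin{itemize}
\item From the scalar weighted estimates for $q_{lj}\prec\Delta$, it uses Hahn--Banach to solve $\Delta^*(D)w_{lj}=q_{lj}^*(D)f_j+R_{lj}$ locally, with $R_{lj}$ supported in an annulus.
\item It assembles a vector $w$ with $\hat w=(P^*)^+Q^*\hat f+\hat R/\Delta^*$ and uses $P^*(P^*)^+Q^*=Q^*$ to get $\Delta^*(D)P^*(D)w=\Delta^*(D)Q^*(D)f+P^*(D)R$.
\item Pairing with $v$ gives $\|\Delta(D)Q(D)v\|_{B_{2,k}}\lesssim\|\Delta(D)P(D)v\|_{B_{2,k}}$. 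Only then is $\Delta$ removed, via Theorem 10.3.2 with $k=1/\tilde\Delta$.
\end{itemize}

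You instead use (ii) on the primal side to get the exact polynomial identity $\Delta Q=(QA)P$, i.e.\ $\Delta(D)Q(D)u=(QA)(D)P(D)u$. This reduces everything to inverting the single scalar operator $\Delta(D)$ on functions supported in $B$. Domination then enters only through the pointwise bound $|q_{lj}|\le\tilde q_{lj}\lesssim\tilde\Delta$. You need neither Theorem~\ref{Hor2.2}, nor Hahn--Banach, nor the bookkeeping of the remainders.

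The step you flag as the technical work is really just a citation. One point must be stated precisely: $E$ has to be H\"ormander's fundamental solution of $\Delta^*(D)$ lying in $B^{\mathrm{loc}}_{\infty,\tilde\Delta}$ (his Theorem 10.2.1). An arbitrary fundamental solution would not do; for instance, adding $\delta'''(x_2)$ when $\Delta^*=\xi_1$ destroys the bound. With that choice, take $\theta\in C_c^\infty$ equal to $1$ on $\overline{B'}-\overline{B}$. Then
\[
\|\psi\|_{B_{2,\tilde\Delta}}=\|\chi((\theta E)*v_l)\|_{B_{2,\tilde\Delta}}\lesssim\|\theta E\|_{B_{\infty,\tilde\Delta}}\|v_l\|_{L^2},
\]
and $\|q_{lj}^*(D)\psi\|_{L^2}\lesssim\|\psi\|_{B_{2,\tilde\Delta}}$. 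Hence $\|v_l\|^2\lesssim\|f\|\,\|v_l\|$, which is the bound.

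Your density paragraph is a detour. Its conclusion ``and with it the bound'' does not follow, as you note yourself.

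Your reduction in fact makes duality unnecessary. The same Theorem 10.3.2 the paper uses at its last step gives directly
\[
\|v_l\|_{L^2}\lesssim\|\Delta(D)v_l\|_{B_{2,1/\tilde\Delta}}\le\sum_j\|q_{lj}(D)f_j\|_{B_{2,1/\tilde\Delta}}\lesssim\|P(D)u\|_{L^2}.
\]
What the paper's heavier construction buys is mainly a template that it reuses, with roles reversed, in the necessity direction (Proposition~\ref{prop.2}). There one starts from the estimate and must manufacture the dual solutions.
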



\begin{proof}
Fix some $k\in \mathcal{K}$. If $p,q:\mathbf{R}^{d}\rightarrow \mathbf{C}$
are scalar polynomials with $q\prec p$, then, by Theorem 10.3.6 in \cite%
{Hor2} we have the estimate 
\begin{equation}
\left\Vert q(D)v\right\Vert _{B_{2,k}}\leq C\left\Vert p(D)v\right\Vert
_{B_{2,k}},  \label{prop1-1}
\end{equation}%
for any $v\in C_{c}^{\infty }(2B)$. By duality, for any $f\in (B_{2,k}(%
\mathbf{R}^{d}))^{\ast }=B_{2,1/k}(\mathbf{R}^{d})$, with $\mathrm{supp\,}%
f\subset B$ we can find some $u_{1}\in B_{2,1/k}(\mathbf{R}^{d})$ such that 
\begin{equation}
p^{\ast }(D)u_{1}=q^{\ast }(D)f\text{,}  \label{prop1-2}
\end{equation}%
in the sense of distributions on $2B$. Choosing some function $\varphi \in
C_{c}^{\infty }(2B)$ with $\varphi =1$ on $B$, we have 
\begin{equation*}
p^{\ast }(D)(u_{1}\varphi )=(p^{\ast }(D)u_{1})\varphi +[p^{\ast
}(D),\varphi ]u_{1},
\end{equation*}%
and moreover since $\mathrm{supp\,}f\subset B$, one can write using (\ref%
{prop1-2}), 
\begin{equation}
p^{\ast }(D)u=q^{\ast }(D)f+R,  \label{prop1-3}
\end{equation}%
where $u:=u_{1}\varphi $ and $R:=[p^{\ast }(D),\varphi ]u_{1}$. By Theorem
10.1.4 and Theorem 10.1.15 in \cite{Hor2} we have that $u\in B_{2,1/k}(%
\mathbf{R}^{d})$. The remainder $R$  is a tempered distribution and (from
the definition of $R$) we have that $\mathrm{supp\,}R\subset \overline{%
2B\backslash B}$.

Taking the Fourier transform in (\ref{prop1-3}) we obtain 
\begin{equation*}
\widehat{u}=\frac{q^{\ast }}{p^{\ast }}\widehat{f}+\frac{1}{p^{\ast }}%
\widehat{R}\in \widehat{B_{2,1/k}}=L_{1/k}^{2},
\end{equation*}%
where $L_{1/k}^{2}$is the usual weighted space of those measurable functions $g$ on $%
\mathbf{R}^{d}$ for which the norm 
\begin{equation*}
\left\Vert g\right\Vert _{L_{1/k}^{2}}:=\left( \int_{\mathbf{R}^{d}}|g(\xi
)/k(\xi )|^{2}d\xi \right) ^{1/2},
\end{equation*}%
is finite.

Hence, we have proved the following: \smallskip

\noindent \textbf{Claim.} \textit{If }$p,q:\mathbf{R}^{d}\rightarrow \mathbf C$%
\textit{\ are scalar polynomials with }$q\prec p$\textit{, and }$f\in
B_{2,1/k}(\mathbf{R}^{d})$\textit{\ with }$\mathrm{supp\,}f\subset B$%
\textit{, then there exists a tempered distribution }$R$\textit{\ with }$%
\mathrm{supp\,}R\subset \overline{2B\backslash B}$\textit{\ and such that } 
\begin{equation*}
\frac{q^{\ast }}{p^{\ast }}\widehat{f}+\frac{1}{p^{\ast }}\widehat{R}\in 
\widehat{B_{2,1/k}}.
\end{equation*}

\bigskip

Consider now a vector valued function $f=(f_{1},...,f_{N})$ with $f_{j}\in
B_{2,1/k}(\mathbf{R}^{d})$\textit{\ with }$\mathrm{supp\,}f_{j}\subset B$,
for any $j=1,\ldots ,M$. Let $\Delta $ be the as in (\ref{P+A}). Since $%
Q\prec P$, we have $q_{lj}^{\ast }\prec \Delta ^{\ast }$, for any $%
l=1,\ldots ,N$, $j=1,\ldots ,M$. By using the Claim we have 
\begin{equation*}
\frac{q_{lj}^{\ast }}{\Delta ^{\ast }}\widehat{f_{j}}+\frac{1}{\Delta ^{\ast
}}\widehat{R_{lj}}\in \widehat{B_{2,1/k}},
\end{equation*}%
where $R_{ij}$ are tempered distributions with $\mathrm{supp\,}R_{lj}\subset 
\overline{2B\backslash B}$.

We now define a function $u\in B_{2,1/k}(\mathbf{R}^{d})$ by the formula 
\begin{equation*}
\widehat{u}:=(P^{\ast })^{+}Q^{\ast }\widehat{f}+\frac{1}{\Delta ^{\ast }}%
\widehat{R},
\end{equation*}%
where 
\begin{equation*}
R:=\left( \sum_{j=1}^{N}R_{1j},....,\sum_{j=1}^{N}R_{Nj}\right) ^{t}
\end{equation*}%
is a tempered distribution with $\mathrm{supp\,}R\subset \overline{%
2B\backslash B}$. By this we get 
\begin{equation}
\Delta ^{\ast }P^{\ast }\widehat{u}:=\Delta ^{\ast }P^{\ast }(P^{\ast
})^{+}Q^{\ast }\widehat{f}+{P^{\ast }}\widehat{R}.  \label{prop1-MP}
\end{equation}

Using Remark \ref{rem.Ker} we can rewrite (\ref{prop1-MP}) as 
\begin{equation*}
\Delta ^{\ast }P^{\ast }\widehat{u}:=\Delta ^{\ast }Q^{\ast }\widehat{f}+{P^*%
}\widehat{R}.
\end{equation*}

Taking the inverse Fourier transform we have 
\begin{equation}
\Delta ^{\ast }(D)P^{\ast }(D)u:=\Delta ^{\ast }(D)Q^{\ast }(D)f+{P^*(D)}R,
\label{prop1-4}
\end{equation}
in the sense of distributions on $\mathbf{R}^{d}$.

Let $v\in C_{c}^{\infty }(B)$ be a function and consider some $f\in
B_{2,1/k}(\mathbf{R}^{d})$\textit{\ with }$\mathrm{supp\,}f\subset B$ such
that $\left\Vert f\right\Vert _{B_{2,1/k}}=1$ and 
\begin{equation}
\left\Vert \Delta (D)Q(D)v\right\Vert _{B_{2,k}}\leq 2\left\langle \Delta
(D)Q(D)v,f\right\rangle .  \label{prop1-5}
\end{equation}

Consider $u\in B_{2,1/k}(\mathbf{R}^{d})$ with $\left\Vert u\right\Vert
_{B_{2,1/k}}\lesssim 1$ and $R$ as in (\ref{prop1-4}). Since $\mathrm{supp\,}%
v$ and $\mathrm{supp\,}R$ are disjoint, one can write: 
\begin{eqnarray*}
\left\langle \Delta (D)Q(D)v,f\right\rangle  &=&\left\langle v,\Delta ^{\ast
}(D)Q^{\ast }(D)f\right\rangle =\left\langle v,\Delta ^{\ast }(D)Q^{\ast
}(D)f+{P^{\ast }(D)}R\right\rangle  \\
&=&\left\langle v,\Delta ^{\ast }(D)P^{\ast }(D)u\right\rangle =\left\langle
\Delta (D)P(D)v,u\right\rangle  \\
&\lesssim &\left\Vert \Delta (D)P(D)v\right\Vert _{B_{2,k}}.
\end{eqnarray*}

By this and (\ref{prop1-5}) we get 
\begin{equation}
\left\Vert \Delta (D)Q(D)v\right\Vert _{B_{2,k}}\lesssim _{k}\left\Vert
\Delta (D)P(D)v\right\Vert _{B_{2,k}},  \label{prop1-6}
\end{equation}%
for any $v\in C_{c}^{\infty }(B)$ and for any $k\in \mathcal{K}$. Using now
Theorem 10.3.2 in \cite{Hor2} for $k:=1/\widetilde{\Delta }$, we have the
estimates 
\begin{equation*}
\left\Vert \Delta (D)Q(D)v\right\Vert _{B_{2,k}}\sim \left\Vert
Q(D)v\right\Vert _{L^{2}}\text{ \ \ and \ \ }\left\Vert \Delta
(D)P(D)v\right\Vert _{B_{2,k}}\sim \left\Vert P(D)v\right\Vert _{L^{2}},
\end{equation*}%
for any $v\in C_{c}^{\infty }(B)$, which together with (\ref{prop1-6}) gives
(\ref{prop1-0}).
\end{proof}


We conclude the proof of Theorem \ref{thm:main_ineq} by establishing
necessity of domination.

\begin{proposition}
\label{prop.2}Suppose $P : \mathbf{R}^d\to \mathcal{M}_{M\times N}(\mathbf{C}%
)$, $Q : \mathbf{R}^d\to \mathcal{M}_{L\times N}(\mathbf{C})$ are matrix
polynomials. If 
\begin{equation}
\left\Vert Q(D)u\right\Vert _{L^{2}}\leq C\left\Vert P(D)u\right\Vert
_{L^{2}},\quad\text{for all $u\in C_{c}^{\infty }(B)^{N}$,}  \label{prop2-0}
\end{equation}
then $P$ dominates $Q$.
\end{proposition}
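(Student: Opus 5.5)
The plan is to prove the two conditions (ii) and (i) of Definition \ref{def:dom} separately. In each case the idea is to test \eqref{prop2-0} on fields of the form $u=W(D)\psi$, where $W$ is a suitable polynomial matrix (or vector) and $\psi\in C_c^\infty(B)$ is scalar. Since $W(D)$ is a differential operator, $u$ stays in $C_c^\infty(B)^N$, and the symbols compose: $P(D)W(D)=(PW)(D)$. Throughout we use the representation \eqref{P+A}, $P^+=A/\Delta$, with $A$ a polynomial matrix.

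\emph{Step 1: condition (i).} Fix $j\in\{1,\dots,M\}$ and take $u:=A(D)(\psi e_j)$ with $\psi\in C_c^\infty(B)$. Then $\widehat{Q(D)u}=QA e_j\hat\psi$, so the $l$-th component of $Q(D)u$ is $q_{lj}(D)\psi$, and
\begin{equation*}
\|q_{lj}(D)\psi\|_{L^2}\leq \|Q(D)u\|_{L^2}.
\end{equation*}
On the other side, $PA=\Delta\,PP^+$ a.e. Since $PP^+(\xi)$ is an orthogonal projection, we have pointwise $|P(\xi)A(\xi)e_j\hat\psi(\xi)|\leq|\Delta(\xi)\hat\psi(\xi)|$ a.e. Plancherel then gives
\begin{equation*}
\|P(D)u\|_{L^2}\leq\|\Delta(D)\psi\|_{L^2}.
\end{equation*}
Combining these two bounds with \eqref{prop2-0} yields
\begin{equation*}
\|q_{lj}(D)\psi\|_{L^2}\leq C\|\Delta(D)\psi\|_{L^2}\quad\text{for all }\psi\in C_c^\infty(B).
\end{equation*}
The necessity part of Theorem \ref{Hor2.2} then gives $q_{lj}\prec\Delta$ for all $l,j$, which is (i).

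\emph{Step 2: condition (ii).} Argue by contradiction and suppose $\ker P(\xi)\not\subset\ker Q(\xi)$ on a set of positive measure. Consider the polynomial matrix
\begin{equation*}
W:=\Delta I_N-AP=\Delta(I-P^+P).
\end{equation*}
Its columns span $\ker P(\xi)$ a.e., because $I-P^+P$ is the orthogonal projection onto $\ker P$ and $\Delta\neq0$ a.e. Moreover $PW=\Delta(P-PP^+P)=0$ identically. If $QW\equiv0$ as a polynomial, then $Q$ would vanish on $\ker P(\xi)$ for a.e. $\xi$, contradicting the assumption. Hence some entry, say $(QW)_{li}$, is a nonzero polynomial.

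Now take $u:=W(D)(\psi e_i)$. Then $P(D)u=0$, while the $l$-th component of $Q(D)u$ is $(QW)_{li}(D)\psi$. This is nonzero for any nonzero $\psi\in C_c^\infty(B)$: its Fourier transform is a nonzero polynomial times the entire function $\hat\psi\not\equiv0$. Thus \eqref{prop2-0} fails, which is the desired contradiction. This argument is essentially the equivalence already recorded in Remark \ref{rem.Ker}.

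The only delicate point is the pointwise bound in Step 1. It relies on $PA=\Delta\,PP^+$ holding almost everywhere and on $PP^+$ being a contraction, after which Plancherel applies directly to the compactly supported smooth $u$. Everything else is a direct reduction to Hörmander's scalar Theorem \ref{Hor2.2}.
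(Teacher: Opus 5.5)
Your proof is correct. Step 2 is essentially the paper's argument. The paper uses $A^1=AP-\Delta I=-W$ and tests \eqref{prop2-0} on $u=A^1(D)v$ to get $QA^1\equiv 0$, hence $QP^+P=Q$ a.e. You run the same computation by contradiction.

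Step 1, however, takes a genuinely different route from the paper. The paper obtains $q_{lj}\prec\Delta$ by duality, in four moves:
\begin{enumerate}
\item It solves $P^*(D)u_1=Q^*(D)f$ for $f\in L^2_c$.
\item It localizes with a cutoff, which produces a remainder $R$ supported in an annulus.
\item It applies $(P^*)^+$ on the Fourier side, using boundedness of the projection $(P^*)^+P^*$.
\item It repeats the pairing argument of Proposition~\ref{prop.1} to reach $\|q_{lj}(D)v\|_{L^2}\lesssim\|\Delta(D)v\|_{L^2}$.
\end{enumerate}

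You reach the same scalar inequality by testing the hypothesis directly on $u=A(D)(\psi e_j)$. You need only $PA=\Delta PP^+$ a.e., the contraction bound $\|P(\xi)P^+(\xi)\|\le 1$, and Plancherel. This is exactly the construction the paper itself uses later, in the proof of Proposition~\ref{prop.4}.

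Your route is shorter and keeps the constant $C$ explicit. It also avoids the existence-by-duality step for $u_1$, the cutoff/commutator remainder, and the norm control on the dual solution that the paper uses only implicitly. The paper's version mainly has the merit of mirroring the structure of the sufficiency proof in Proposition~\ref{prop.1}; it yields no stronger conclusion.
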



\begin{proof}
We first show that $\ker P(\xi )\subseteq \ker Q(\xi )$ for almost all $\xi
\in \mathbf{R}^{d}$. With the notation in (\ref{P+A}) consider the matrix
polynomial%
\begin{equation*}
A^{1}:=AP-\Delta I.
\end{equation*}

By (\ref{P+A}) we have that $A^{1}=\Delta (P^{+}P-I)$, a.e. on $\mathbf{R}%
^{d}$ and we have (see Remark \ref{rem.Ker}), 
\begin{equation*}
P(D)A^{1}(D)v=0,
\end{equation*}%
on $\mathbf{R}^{d}$, for any $v\in C_{c}^{\infty }(B)^{N}$. Hence, applying (%
\ref{prop2-0}) for $u=A^{1}(D)v\in C_{c}^{\infty }(B)^{N}$, we get $%
Q(D)A^{1}(D)v=0$, and 
\begin{equation*}
Q(\xi )A^{1}(\xi )\widehat{v}(\xi )=0,
\end{equation*}%
for any $\xi \in \mathbf{R}^{d}$ and any $v\in C_{c}^{\infty }(B)^{N}$. This
immediately implies that $Q(\xi )A^{1}(\xi )=0$, for any $\xi \in \mathbf{R}%
^{d}$ (we can choose $v$ of the form $v=w\varphi $, where $\varphi \in
C_{c}^{\infty }(B)$ is scalar and $w\in \mathbf{C}^{N}$ is an arbitrary
vector). Now, we have $\Delta Q(P^{+}P-I)=0$, a.e. on $\mathbf{R}^{d}$ and
since $\Delta $ is not trivial, $QP^{+}P=Q$, a.e. on $\mathbf{R}^{d}$. From
this we obtain the inclusion of kernels.

\bigskip

Now we show that $q_{lj}\prec \Delta $, for all $l=1,\ldots,N$, $j=1,\ldots,M
$. The arguments are similar to those used in the proof of Proposition \ref%
{prop.1}. By duality, we get from (\ref{prop2-0}) that, for any $f\in
L_{c}^{2}(B)$ there exists some $u_{1}\in L_{c}^{2}(\mathbf{R}^{d})$ such
that 
\begin{equation*}
P^{\ast }(D)u_{1}=Q^{\ast }(D)f,
\end{equation*}%
on $B$. Suppose now that $\mathrm{supp\,} f\subset (1/2)B$ and choose some
function $\varphi \in C_{c}^{\infty }(B)$ with $\varphi =1$ on $(1/2)B$, we
have (as in (\ref{prop1-3}))

\begin{equation}
P^{\ast }(D)u=Q^{\ast }(D)f+R,  \label{prop2-1}
\end{equation}%
on $\mathbf{R}^{d}$, where $u:=u_{1}\varphi \in L_{c}^{2}(B)$ and $%
R:=[P^{\ast }(D),\varphi ]u_{1}\in W^{-n_{0},2}(\mathbf{R}^{d})$ (for some $%
n_{0}\in \mathbf{N}^{\ast }$) with $\mathrm{supp\,} R\subset \overline{%
B\backslash (1/2)B}$. By taking the Fourier transform in (\ref{prop2-1}) we
get 
\begin{equation*}
P^{\ast }\widehat{u}=Q^{\ast }\widehat{f}+\widehat{R},
\end{equation*}%
and hence 
\begin{equation}
(P^{\ast })^{+}P^{\ast }\widehat{u}=(P^{\ast })^{+}Q^{\ast }\widehat{f}%
+(P^{\ast })^{+}\widehat{R},  \label{prop2-2}
\end{equation}%
a.e., on $\mathbf{R}^{d}$. Note that $(P^{\ast })^{+}P^{\ast }$ is a
Hermitian projection onto the image of $P$. Hence, $(P^{\ast })^{+}P^{\ast }$
can be diagonalized by an unitary matrix, and its diagonal form contains
only $1$ or $0$. It follows that any entry of $(P(\xi )^{\ast })^{+}P(\xi
)^{\ast }$ is bounded by a constant independent of $\xi $. Now we get that $%
(P^{\ast })^{+}P^{\ast }\widehat{u}\in L^{2}(\mathbf{R}^{d})$ and by (\ref%
{prop2-1}), 
\begin{equation*}
(P^{\ast })^{+}Q^{\ast }\widehat{f}+(P^{\ast })^{+}\widehat{R}\in L^{2}(%
\mathbf{R}^{d}).
\end{equation*}

Writing $P^{+}=\Delta ^{-1}A$, as in (\ref{P+A}) we have $(P^{\ast
})^{+}=(\Delta ^{\ast })^{-1}A^{\ast }$ and we can write 
\begin{equation}
(\Delta ^{\ast })^{-1}(A^{\ast }Q^{\ast })\widehat{f}+(\Delta ^{\ast })^{-1}%
\widehat{R^{\prime }}\in L^{2}(\mathbf{R}^{d}),  \label{prop2-3}
\end{equation}%
where $R^{\prime }:=A^{\ast }(D)R$ is a distribution supported on $\overline{%
B\backslash (1/2)B}$.

Write $A^{\ast }Q^{\ast }=(q_{lj}^{\ast })_{l,j}$ and fix some $j\in
\{1,...,L\}$. Choosing $f$ of the form $f=(0,...,0,f_{j},0,...,0)^{t}\in
L_{c}^{2}((1/2)B)$ we get from (\ref{prop2-3}) that 
\begin{equation*}
(\Delta ^{\ast })^{-1}q_{lj}^{\ast }\widehat{f_{j}}+(\Delta ^{\ast })^{-1}%
\widehat{R_{lj}^{\prime }}\in L^{2}(\mathbf{R}^{d}),
\end{equation*}%
for any $l\in \{1,...,M\}$, where $R_{lj}^{\prime }$ are distributions
supported on $\overline{B\backslash (1/2)B}$. If we define now $u_{lj}\in
L^{2}(\mathbf{R}^{d})$ by 
\begin{equation*}
\widehat{u}_{lj}=(\Delta ^{\ast })^{-1}q_{lj}^{\ast }\widehat{f_{j}}+(\Delta
^{\ast })^{-1}\widehat{R_{lj}^{\prime }},
\end{equation*}%
we get 
\begin{equation*}
\Delta ^{\ast }(D)u_{lj}=q_{lj}^{\ast }(D)f_{j}+R_{lj}^{\prime },
\end{equation*}%
in the sense of distributions on $\mathbf{R}^{d}$. As in the proof of
Proposition \ref{prop.1} (see (\ref{prop1-4})--(\ref{prop1-6})) we get 
\begin{equation*}
\left\Vert q_{lj}(D)v\right\Vert _{L^{2}}\lesssim \left\Vert \Delta
(D)v\right\Vert _{L^{2}},
\end{equation*}%
for any $v\in C_{c}^{\infty }(B)$. Since the polynomials $q_{lj}$ and $%
\Delta $ are scalar, by Theorem 10.3.6 in \cite{Hor2}, we get $q_{lj}\prec
\Delta $.
\end{proof}





\subsection{Compactness}

The main goal of this section is to establish Theorem \ref{thm:main_compact}%
, to which end we proceed with a compactness criterion due to H\"ormander:


\begin{theorem}
\label{th.cH}(Theorem 10.1.10 in \cite{Hor2} formulated only for the spaces $%
B_{2,k}$) Suppose $\Omega \subset \mathbf{R}^{d}$ is a compact set with
nonempty interior. If $k_{1},k_{2}\in \mathcal{K}$ such that 
\begin{equation}
\frac{k_{2}(\xi )}{k_{1}(\xi )}\rightarrow 0\text{, \ when }|\xi
|\rightarrow \infty ,  \label{cH-0}
\end{equation}%
then, $B_{2,k_{1}}(\mathbf{R}^{d})\cap \mathcal{E}^{\prime }(\Omega )$
embeds compactly in $B_{2,k_{2}}(\mathbf{R}^{d})$. Conversely, if $%
B_{2,k_{1}}(\mathbf{R}^{d})\cap \mathcal{E}^{\prime }(\Omega )$ embeds
compactly in $B_{2,k_{2}}(\mathbf{R}^{d})$, then (\ref{cH-0}) holds.
\end{theorem}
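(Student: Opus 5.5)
We prove both directions on the Fourier side. The two ingredients are the tempered-weight inequality $k(\xi+\eta)\le(1+C|\xi|^a)k(\eta)$, which controls how the weights $k_1,k_2$ vary, and the compact support in $\Omega$, which makes $\widehat u$ smooth with controlled derivatives.

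\emph{Sufficiency.} Let $(u_n)$ be bounded in $B_{2,k_1}(\mathbf R^d)\cap\mathcal E'(\Omega)$, say $\|u_n\|_{B_{2,k_1}}\le 1$. Fix $\chi\in C_c^\infty(\mathbf R^d)$ with $\chi=1$ near $\Omega$. Then $u_n=\chi u_n$, so $\widehat{u_n}=(2\pi)^{-d}\widehat\chi*\widehat{u_n}$. By Cauchy--Schwarz,
\begin{equation*}
|\widehat{u_n}(\xi)|\lesssim \Big(\int|\widehat\chi(\xi-\eta)|^2k_1(\eta)^{-2}\,d\eta\Big)^{1/2}.
\end{equation*}
The tempered inequality gives $k_1(\eta)^{-1}\le(1+C|\xi-\eta|^a)k_1(\xi)^{-1}$, and $\widehat\chi$ is Schwartz. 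Hence $|\widehat{u_n}(\xi)|\lesssim k_1(\xi)^{-1}$. The same bound holds for $\nabla\widehat{u_n}$, obtained by replacing $\chi$ with $x_j\chi$. So the $\widehat{u_n}$ are uniformly bounded and equicontinuous on compact sets, and Arzel\`a--Ascoli plus a diagonal argument yield a subsequence converging locally uniformly. For this subsequence and any $R>0$ we split
\begin{equation*}
\|u_n-u_m\|_{B_{2,k_2}}^2=\int_{|\xi|<R}|\widehat{u_n}-\widehat{u_m}|^2k_2^2\,d\xi+\int_{|\xi|\ge R}|\widehat{u_n}-\widehat{u_m}|^2k_2^2\,d\xi .
\end{equation*}
The second term is at most $4\sup_{|\xi|\ge R}(k_2/k_1)^2$, which is small for $R$ large by (\ref{cH-0}). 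For fixed $R$, the first term tends to $0$ by uniform convergence on the ball. Thus the subsequence is Cauchy in the Banach space $B_{2,k_2}$, which proves compactness.

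\emph{Necessity.} Suppose (\ref{cH-0}) fails. Then there are $\varepsilon>0$ and $|\xi_n|\to\infty$ with $k_2(\xi_n)\ge\varepsilon k_1(\xi_n)$. Fix a nonzero $\varphi\in C_c^\infty(\mathrm{int}\,\Omega)$ and set
\begin{equation*}
u_n=\frac{e^{ix\cdot\xi_n}\varphi}{k_1(\xi_n)},\qquad\text{so that}\qquad \widehat{u_n}=\frac{\widehat\varphi(\cdot-\xi_n)}{k_1(\xi_n)}.
\end{equation*}
\begin{itemize}
\item Upper bound: since $k_1(\xi)\le(1+C|\xi-\xi_n|^a)k_1(\xi_n)$ and $\widehat\varphi$ is Schwartz, $\|u_n\|_{B_{2,k_1}}$ is bounded.
\item Lower bound: since $k_2(\xi)\ge k_2(\xi_n)/(1+C|\xi-\xi_n|^a)$, we get $\|u_n\|_{B_{2,k_2}}\ge c\,k_2(\xi_n)/k_1(\xi_n)\ge c\varepsilon$.
\item Weak limit: $u_n\to0$ in $\mathscr D'$. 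Indeed, $k_1(\xi_n)\ge k_1(0)/(1+C|\xi_n|^a)$, so $1/k_1(\xi_n)$ grows at most polynomially, while for a test function $\psi$ the pairing $\int e^{ix\cdot\xi_n}\varphi\psi\,dx$ decays faster than any power of $|\xi_n|$.
\end{itemize}
Any $B_{2,k_2}$-limit of a subsequence is also a limit in $\mathscr S'$, because $\psi\mapsto\langle\cdot,\psi\rangle$ is continuous on $B_{2,k_2}$ by Cauchy--Schwarz with weight $1/k_2$. Such a limit must therefore be $0$, which contradicts the lower bound $c\varepsilon$. So the embedding is not compact.

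The main obstacle is the uniform pointwise and derivative bound for $\widehat{u_n}$ in the sufficiency part. This is exactly where the tempered property of $k_1$ and the support in the fixed compact $\Omega$ enter.
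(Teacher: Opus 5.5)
Your proof is correct in both directions: sufficiency follows from the bounds $|\widehat{u_n}|+|\nabla\widehat{u_n}|\lesssim k_1^{-1}$ (via $\widehat{u_n}=(2\pi)^{-d}\widehat\chi*\widehat{u_n}$ and the tempered inequality), Arzel\`a--Ascoli, and the split at $|\xi|=R$, and necessity follows from the modulated bumps $e^{ix\cdot\xi_n}\varphi/k_1(\xi_n)$, which are bounded in $B_{2,k_1}$ and tend to $0$ in $\mathscr D'$ while their $B_{2,k_2}$-norms stay bounded below. The paper does not prove this statement itself but quotes H\"ormander's Theorem 10.1.10, and your argument is essentially H\"ormander's own proof of that result.
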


Next, we prove the sufficiency of compact domination for $L^2$-compactness:

\begin{proposition}
\label{prop.3}Suppose $P,Q:\mathbf{R}^{d}\rightarrow \mathcal{M}_{N}(\mathbf{%
C})$ are matrix polynomials. If $Q\prec _{c}P$ then, for any sequence $%
(u_{n})_{n\geq 1}$ in $C_{c}^{\infty }(B)^{N}$ with $\left\Vert
P(D)u_{n}\right\Vert _{L^{2}}\leq 1$, the sequence $(Q(D)u_{n})_{n\geq 1}$
is relatively compact in $L^{2}$.
\end{proposition}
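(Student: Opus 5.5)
The idea is to run the duality argument of Proposition \ref{prop.1} with two different tempered weights, and then read off compactness from H\"ormander's criterion, Theorem \ref{th.cH}.

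\emph{Step 1: reduction to scalar weighted estimates.} Proposition \ref{prop.1}, with $k:=1/\widetilde{\Delta}$ and Theorem 10.3.2 of \cite{Hor2}, gives for $v\in C_c^\infty(B)^N$
\[
\|Q(D)v\|_{L^2}\sim\|\Delta(D)Q(D)v\|_{B_{2,1/\widetilde\Delta}},\qquad \|P(D)v\|_{L^2}\sim\|\Delta(D)P(D)v\|_{B_{2,1/\widetilde\Delta}}.
\]
Condition (ii) gives $Q=QP^+P=\Delta^{-1}(QA)P$ a.e., so as polynomial identities $\Delta Q=(q_{lj})_{l,j}\,P$. Hence each component of $\Delta(D)Q(D)v$ is $\sum_j q_{lj}(D)w_j$, where $w:=P(D)v\in C_c^\infty(B)^M$. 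It therefore suffices to show the following: if $(w_n)$ is supported in $\bar B$ with $\|\Delta(D)w_n\|_{B_{2,1/\widetilde\Delta}}\lesssim 1$, then each sequence $q_{lj}(D)w_{n,j}$ is relatively compact in $B_{2,1/\widetilde\Delta}$.

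\emph{Step 2: passing to a weight on $w$.} Compactness of $(Q(D)u_n)$ in $L^2$ is equivalent to compactness of $(\Delta(D)Q(D)u_n)$ in $B_{2,1/\widetilde\Delta}$, because the equivalence of norms above applies to differences $u_n-u_m$, which are again supported in $B$. The bound $\|\Delta(D)w_n\|_{B_{2,1/\widetilde\Delta}}\sim\|w_n\|_{L^2}$ comes from H\"ormander's Theorem 10.3.2 for compactly supported functions, since $\widetilde{\Delta}\sim$ the weight governing $\Delta(D)$. So $(w_n)$ is bounded in $L^2=B_{2,1}$, with supports in $\bar B$.

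\emph{Step 3: compactness.} By Theorem 10.1.15 of \cite{Hor2}, $q(D)$ maps $B_{2,k}\cap\mathcal E'$ into $B_{2,k/\widetilde q}$ boundedly. The goal is then to show that $(q_{lj}(D)w_{n,j})$ is compact in $B_{2,1/\widetilde\Delta}$. I would apply Theorem \ref{th.cH} with $k_1=1$ and $k_2=\widetilde{q_{lj}}/\widetilde\Delta$; since $q_{lj}\prec_c\Delta$, we have $k_2/k_1\to 0$, so $(w_{n,j})$ is precompact in $B_{2,\widetilde{q_{lj}}/\widetilde\Delta}$. The continuity of $q_{lj}(D)$ from $B_{2,\widetilde q/\widetilde\Delta}$ to $B_{2,1/\widetilde\Delta}$ (using $|q|\le\widetilde q$ on the Fourier side) then transfers precompactness to $(q_{lj}(D)w_{n,j})$.

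\emph{Main obstacle.} The delicate step is justifying that the weighted norm $\|\Delta(D)\,\cdot\,\|_{B_{2,1/\widetilde\Delta}}$ is comparable to the $L^2$ norm for $w$ supported in a fixed ball; only the direction $\|\Delta(D)w\|_{B_{2,1/\widetilde\Delta}}\lesssim\|w\|_{L^2}$ is trivial. I would handle this through the estimate chain already used in the proof of Proposition \ref{prop.1}, working throughout with differences $u_n-u_m$ so that compactness, not just boundedness, is preserved. One must also check that $\widetilde q_{lj}/\widetilde\Delta$ is a tempered weight (it is, being a quotient of such) and handle $q_{lj}\equiv0$ trivially.
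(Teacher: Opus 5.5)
Your proposal is correct and is essentially the paper's proof. It rests on the same three ingredients:
\begin{enumerate}
\item the compact embedding $B_{2,1}\cap\mathcal E'(\bar B)\hookrightarrow B_{2,\widetilde{q}_{lj}/\widetilde\Delta}$ from Theorem \ref{th.cH}, applied to $w_n=P(D)u_n$;
\item the polynomial identity $\Delta Q=(QA)P$, which comes from (ii);
\item H\"ormander's Theorem 10.3.2, used to pass from $\Delta(D)Q(D)u_n$ in $B_{2,1/\widetilde\Delta}$ back to $Q(D)u_n$ in $L^2$.
\end{enumerate}
Your use of $|q_{lj}|\le\widetilde q_{lj}$ and of Cauchy differences is a tidier version of the paper's weak-limit bookkeeping. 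The ``main obstacle'' you flag is not really one: $\|w_n\|_{L^2}\le 1$ is the hypothesis, and the only nontrivial comparison needed, $\|g\|_{L^2}\lesssim\|\Delta(D)g\|_{B_{2,1/\widetilde\Delta}}$ for $g=Q(D)(u_n-u_m)\in\mathcal E'(\bar B)$, is exactly Theorem 10.3.2, the same input the paper uses.
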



\begin{proof}
Consider some nontrivial $q_{lj}$ (otherwise the argument is trivial). We
have $\widetilde{q}_{lj}/\widetilde{\Delta }\in \mathcal{K}$ and that $%
\widetilde{q}_{lj}/\widetilde{\Delta }\rightarrow 0$ at $\infty $. By
Theorem \ref{th.cH} \ we get that, for any sequence $(\varphi _{n})_{n\geq 1}
$ in $L^{2}(\mathbf{R}^{d})$ with $\varphi _{n}\in \mathcal{E}^{\prime }(B)$
and $\left\Vert \varphi _{n}\right\Vert _{L^{2}}\leq 1$, the sequence $((%
\widetilde{q}_{lj}/\widetilde{\Delta })(D)\varphi _{n})_{n\geq 1}$ is
relatively compact in $L^{2}(\mathbf{R}^{d})$. Hence, there exists some
function $g\in L^{2}$ such that 
\begin{equation}
(\widetilde{q}_{lj}/\widetilde{\Delta })(D)\varphi _{n}\rightarrow g,
\label{prop.3-1}
\end{equation}%
in the $L^{2}$ norm, up to a subsequence. If we introduce $\widetilde{g}:=(%
\widetilde{\Delta }/\widetilde{q}_{lj})(D)g\in B_{2,k}$ (where $k:=%
\widetilde{q}_{lj}/\widetilde{\Delta }$) we have $\varphi _{n}\rightarrow 
\widetilde{g}$, in the sense of tempered distributions, up to a subsequence.
In particular, since $\varphi _{n}\in \mathcal{E}^{\prime }(B)$ for all $%
n\geq 1$, we get $\widetilde{g}\in \mathcal{E}^{\prime }(B)$. Now, since $%
\varphi _{n}-\widetilde{g}\in \mathcal{E}^{\prime }(B)$, by applying Theorem
10.3.2 in \cite{Hor2} and (\ref{prop.3-1}) we have 
\begin{equation*}
\left\Vert q_{lj}(D)(\varphi _{n}-\widetilde{g})\right\Vert _{B_{2,1/%
\widetilde{\Delta }}}\sim \left\Vert \widetilde{q}_{lj}(D)(\varphi _{n}-%
\widetilde{g})\right\Vert _{B_{2,1/\widetilde{\Delta }}}=\left\Vert (%
\widetilde{q}_{lj}/\widetilde{\Delta })(D)(\varphi _{n}-\widetilde{g}%
)\right\Vert _{L^{2}}\rightarrow 0,
\end{equation*}%
up to a subsequence. Hence, we have obtained that for any sequence $(\varphi
_{n})_{n\geq 1}$ in $L^{2}(\mathbf{R}^{d})$ with $\varphi _{n}\in \mathcal{E}%
^{\prime }(B)$ and $\left\Vert \varphi _{n}\right\Vert _{L^{2}}\leq 1$, the
sequence $((q_{lj}/\widetilde{\Delta })(D)\varphi _{n})_{n\geq 1}$ is
relatively compact in $L^{2}(\mathbf{R}^{d})$. By considering all these
facts in $i$ and $j$ we get that for any sequence $(f_{n})_{n\geq 1}$ in $%
L^{2}(\mathbf{R}^{d})^{N}$ with $f_{n}\in \mathcal{E}^{\prime }(B)^{N}$ and $%
\left\Vert f_{n}\right\Vert _{L^{2}}\leq 1$, the sequence $((QA/\widetilde{%
\Delta })(D)f_{n})_{n\geq 1}$ is relatively compact in $L^{2}(\mathbf{R}^{d})
$. We apply this fact to $f_{n}$ of the form $f_{n}:=P(D)u_{n}$, where $%
u_{n}\in \mathcal{E}^{\prime }(B)^{N}$. Hence, for any sequence $%
(u_{n})_{n\geq 1}$ in $\mathcal{E}^{\prime }(B)^{N}$ with $\left\Vert
P(D)u_{n}\right\Vert _{L^{2}}\leq 1$, the sequence $((QA/\widetilde{\Delta }%
)(D)P(D)u_{n})_{n\geq 1}$ is relatively compact in $L^{2}(\mathbf{R}^{d})^{L}
$. Recall that $\ker P\subset \ker Q$, a.e. on $\mathbf{R}^{d}$, which gives
us $QP^{+}P=Q$, a.e. on $\mathbf{R}^{d}$ (see Remark \ref{rem.Ker}).
Consequently (using also the identity $P^{+}=A/\Delta $), 
\begin{equation*}
\frac{QA}{\widetilde{\Delta }}P=\frac{\Delta }{\widetilde{\Delta }}QP^{+}P=%
\frac{\Delta }{\widetilde{\Delta }}Q,
\end{equation*}%
and $(\Delta /\widetilde{\Delta })(D)Q(D)u_{n})_{n\geq 1}$ is relatively
compact in $L^{2}(\mathbf{R}^{d})$. This implies that there exists some $%
G\in L^{2}(\mathbf{R}^{d})^{N}$ such that 
\begin{equation}
(\Delta /\widetilde{\Delta })(D)Q(D)u_{n}\rightarrow G,  \label{prop.3-2}
\end{equation}%
in the $L^{2}$ norm, up to a subsequence. Since $Q\prec P$ (the condition $%
Q\prec _{c}P$ is stronger) we have 
\begin{equation*}
\left\Vert Q(D)u_{n}\right\Vert _{L^{2}}\lesssim \left\Vert
P(D)u_{n}\right\Vert _{L^{2}}\leq 1,
\end{equation*}%
and the sequential Banach-Alaoglu theorem show that there exists $\widetilde{%
G}\in L_{c}^{2}(B)^{N}$ such that 
\begin{equation*}
Q(D)u_{n}\rightarrow \widetilde{G},
\end{equation*}%
weakly in $L^{2}$, up to a subsequence. By this (\ref{prop.3-2}) we have $%
(\Delta /\widetilde{\Delta })(D)\widetilde{G}=G$. Hence, by Theorem 10.3.2
in \cite{Hor2} (note that $Q(D)u_{n}-\widetilde{G}\in \mathcal{E}^{\prime
}(B)^{N}$) and (\ref{prop.3-2}) we get 
\begin{eqnarray*}
\left\Vert Q(D)u_{n}-\widetilde{G}\right\Vert _{L^{2}} &=&\left\Vert 
\widetilde{\Delta }(D)(Q(D)u_{n}-\widetilde{G})\right\Vert _{B_{2,1/%
\widetilde{\Delta }}}\lesssim \left\Vert \widetilde{\Delta }(D)(Q(D)u_{n}-%
\widetilde{G})\right\Vert _{B_{2,1/\widetilde{\Delta }}} \\
&\lesssim &\left\Vert (\Delta /\widetilde{\Delta })(D)Q(D)u_{n}-G\right\Vert
_{L^{2}}\rightarrow 0,
\end{eqnarray*}%
on a subsequence. This proves Proposition \ref{prop.3}.
\end{proof}



Before considering the converse of Proposition \ref{prop.3}, we give a
direct consequence that will be used in the proof of Theorem \ref{thm:main_lsc}.

\begin{corollary}
\label{cor.conv0}Suppose $P : \mathbf{R}^d\to \mathcal{M}_{M\times N}(%
\mathbf{C})$, $Q : \mathbf{R}^d\to \mathcal{M}_{L\times N}(\mathbf{C})$ are
matrix polynomials such that $Q\prec _{c}P$. Consider a sequence $%
(u_{n})_{n\geq 1}$ in $C_{c}^{\infty }(B)^{N}$ such that $%
P(D)u_{n}\rightarrow 0$ weakly in $L^{2}$. Then, $Q(D)u_{n}\rightarrow 0$
strongly in $L^{2}$, up to a subsequence.
\end{corollary}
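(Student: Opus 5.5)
We reduce Corollary \ref{cor.conv0} to Proposition \ref{prop.3} together with a weak-limit identification. Since $P(D)u_n\rightharpoonup 0$ weakly in $L^2$, the uniform boundedness principle gives $\sup_n\|P(D)u_n\|_{L^2}<\infty$. After rescaling we may assume $\|P(D)u_n\|_{L^2}\leq 1$. The argument of Proposition \ref{prop.3} only uses the factorization $QP^+=(q_{lj}/\Delta)$ and Theorem \ref{th.cH}, so it applies verbatim to rectangular $P,Q$. It shows that $(Q(D)u_n)_{n\geq1}$ is relatively compact in $L^2$. We therefore extract a subsequence with $Q(D)u_{n}\to G$ strongly in $L^2$, and it remains only to prove $G=0$.

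To identify $G$, we use the algebraic identity $Q=QP^+P=\Delta^{-1}QAP$ from Remark \ref{rem.Ker}, which holds because $\ker P\subset\ker Q$. This gives $\Delta Q=(QA)P$ as polynomial matrices (almost everywhere, hence identically). Consequently $\Delta(D)Q(D)u_n=(QA)(D)P(D)u_n$ for every $n$. Test this against $\psi\in C_c^\infty(\mathbf R^d)^L$:
\begin{equation*}
\langle \Delta(D)Q(D)u_n,\psi\rangle=\langle P(D)u_n,(QA)^*(D)\psi\rangle\to 0,
\end{equation*}
since $(QA)^*(D)\psi\in L^2$ and $P(D)u_n\rightharpoonup0$. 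On the other hand, the left side equals $\langle Q(D)u_n,\Delta^*(D)\psi\rangle\to\langle G,\Delta^*(D)\psi\rangle$. Hence $\Delta(D)G=0$ in the sense of distributions.

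Finally, $G$ is supported in $\bar B$, because every $Q(D)u_n$ is supported in $B$. Thus $\widehat G$ is an entire function, and $\Delta\widehat G=0$ with $\Delta\not\equiv0$ forces $\widehat G=0$ on the dense set $\{\Delta\neq0\}$, hence everywhere by continuity. So $G=0$, and $Q(D)u_n\to0$ strongly in $L^2$ along the subsequence. (Since $G=0$ along every convergent subsequence, the whole sequence in fact converges.)

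The only delicate point is this identification step, namely making sure the limit is $0$ and not merely some element annihilated by $P(D)$. It is handled by the exact polynomial identity $\Delta Q=QAP$ combined with the compact-support/Paley--Wiener argument. If one prefers to avoid distributions in $\Delta(D)G$, an alternative is to work on the Fourier side directly: $\Delta\widehat{Q(D)u_n}=QA\,\widehat{P(D)u_n}$ converges pointwise to $0$ because $\widehat{P(D)u_n}(\xi)=\langle P(D)u_n, e^{i\xi\cdot}\chi\rangle$ for a cutoff $\chi=1$ on $B$.
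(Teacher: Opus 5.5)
Your proposal is correct and follows the paper's proof essentially step for step: weak convergence gives boundedness, Proposition~\ref{prop.3} gives a strongly convergent subsequence, the polynomial identity $\Delta Q=(QA)P$ gives $\Delta(D)G=0$, and a Fourier-side argument concludes $G=0$. The only cosmetic difference is the last step: you use Paley--Wiener (so $\widehat{G}$ is entire) together with the density of $\{\Delta\neq 0\}$, whereas the paper uses $\widehat{g}\in L^2$ and the fact that $\{\Delta=0\}$ has Lebesgue measure zero; your remarks that Proposition~\ref{prop.3} extends to rectangular $P,Q$ and that the full sequence converges are correct small refinements.
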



\begin{proof}
Since the sequence $(P(D)u_{n})_{n\geq 1}$ is weakly convergent in $L^{2}$,
it is also bounded in $L^{2}$. Hence, by Proposition \ref{prop.3}, there
exists some $g\in L_{c}^{2}(B)^{L}$ such that $Q(D)u_{n}\rightarrow g$
strongly in $L^{2}$, up to a subsequence. In particular, we have 
\begin{equation}
\Delta (D)Q(D)u_{n}\rightarrow \Delta (D)g,  \label{conv0-1}
\end{equation}%
in the sense of distributions. The condition $Q\prec _{c}P$ implies that $%
QP^{+}P=Q$ a.e. (see Remark \ref{rem.Ker}) and we can rewrite $\Delta
(D)Q(D)u_{n}$ in (\ref{conv0-1}) as 
\begin{equation}
\Delta (D)Q(D)u_{n}=(\Delta (D)Q(D)P^{+}(D))P(D)u_{n}=(Q(D)A(D))P(D)u_{n},
\label{conv0-2}
\end{equation}%
in $L^{2}$. Since $P(D)u_{n}\rightarrow 0$ in the sense of distributions,
and $Q(D)A(D)$ is a differential polynomial, we have by (\ref{conv0-2}) that 
\begin{equation*}
\Delta (D)Q(D)u_{n}\rightarrow 0,
\end{equation*}%
in the sense of distributions. This combined with (\ref{conv0-1}) gives us. 
\begin{equation}
\Delta (D)g=0,  \label{Dg=0}
\end{equation}%
in the sense of distributions. Taking the Fourier transform in (\ref{Dg=0})
we get $\Delta \widehat{g}=0$ in the sense of tempered distributions. Since, 
$\widehat{g}$ is an $L^{2}$ function and $\Delta $ is nontrivial (the set $%
\{\xi \in \mathbf{R}^{d}$ $|$ $\Delta (\xi )=0\}$ is of Lebesgue measure $0$%
), we get $g\equiv 0$.
\end{proof}


\begin{proposition}
\label{prop.4}Suppose $P:\mathbf{R}^{d}\rightarrow \mathcal{M}_{M\times N}(%
\mathbf{C})$, $Q:\mathbf{R}^{d}\rightarrow \mathcal{M}_{L\times N}(\mathbf{C}%
)$ are matrix polynomials. Suppose that for any sequence $(u_{n})_{n\geq 1}$
in $C_{c}^{\infty }(B)^{N}$ with $\left\Vert P(D)u_{n}\right\Vert
_{L^{2}}\leq 1$, the sequence $(Q(D)u_{n})_{n\geq 1}$ is relatively compact
in $L^{2}$. Then, $Q\prec _{c}P$.
\end{proposition}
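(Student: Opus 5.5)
The plan is to mirror the proof of Proposition \ref{prop.2}, replacing Hörmander's $L^2$-estimate (Theorem 10.3.6 in \cite{Hor2}) by the scalar compactness criterion, Theorem \ref{Hor2.2c}.

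First, the hypothesis gives boundedness. If $\|Q(D)u_n\|_{L^2}$ were unbounded while $\|P(D)u_n\|_{L^2}\leq 1$, the sequence $(Q(D)u_n)_{n\geq 1}$ could not be relatively compact. Hence the estimate \eqref{prop2-0} holds, and Proposition \ref{prop.2} gives $Q\prec P$. In particular condition (ii) holds, and we have $QP^+P=Q$ a.e. It remains to prove (i'), namely $q_{lj}\prec_c\Delta$ for all $l,j$.

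By Theorem \ref{Hor2.2c} (converse part), it suffices to show the following scalar compactness property: for every sequence $v_n\in C_c^\infty(B)$ with $\|\Delta(D)v_n\|_{L^2}\leq 1$, the sequence $(q_{lj}(D)v_n)_{n\geq 1}$ is relatively compact in $L^2$. I would obtain this from the dual formulation used in Proposition \ref{prop.2}. The hypothesis says that the operator $T\colon P(D)u\mapsto Q(D)u$ is compact from $\overline{\{P(D)u\}}\subset L^2$ into $L^2$. By Schauder's theorem its adjoint is compact. Concretely, I expect to prove the following: for $f$ in the unit ball of $L^2((1/2)B)$, the solutions $u$ of \eqref{prop2-1} can be chosen so that $u=u_1\varphi$ ranges over a relatively compact set in $L^2$. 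From this, $(P^*)^+P^*\widehat u$ ranges over a relatively compact set, and so does the left side of \eqref{prop2-3}. Isolating a single entry as before yields $\Delta^*(D)u_{lj}=q_{lj}^*(D)f_j+R'_{lj}$, with $u_{lj}$ depending compactly on $f_j$.

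The hard part is the last step: transferring this compact dependence back to the primal estimate for $q_{lj}$ and $\Delta$. In Proposition \ref{prop.1}, the pairing identity
\[
\langle \Delta(D)q_{lj}(D)v,f\rangle=\langle \Delta(D)v,\, u_{lj}\rangle
\]
(up to the order conventions of that proof) shows that $\Delta(D)v\mapsto q_{lj}(D)v$ is the adjoint of the map $f_j\mapsto u_{lj}$, compressed to functions supported in $(1/2)B$. Since that map is compact, its adjoint is compact as well. This gives compactness of $(q_{lj}(D)v_n)$ for $v_n\in C_c^\infty((1/2)B)$. Rescaling, or working throughout with $(1/2)B$ in Theorem \ref{Hor2.2c}, whose proof via Theorem \ref{th.cH} is insensitive to the choice of compact set with nonempty interior, then yields $q_{lj}\prec_c\Delta$.

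The main obstacle is choosing the solution $u_1$ linearly and boundedly in $f$, so that the duality map is a genuine compact operator. The remainder $R'$ must also be handled, since the supports are separated only on the smaller ball. I would first try taking $u_1$ as the minimal-norm solution, given by the Riesz representation of the functional $P(D)u\mapsto\langle Q(D)u,f\rangle$. This functional is bounded by the estimate and is compact in $f$ by Schauder's theorem, and the resulting choice is linear.
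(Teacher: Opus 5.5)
Your argument is correct, but it reaches the scalar compactness property by a much longer route than the paper. Your opening step matches the paper: relative compactness forces boundedness, hence \eqref{prop2-0}, hence $Q\prec P$ by Proposition~\ref{prop.2}. So does your closing step, the converse in Theorem~\ref{Hor2.2c} via Theorem~\ref{th.cH} and Theorem 10.3.2 of \cite{Hor2}.

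In between, the paper does not dualize at all. It tests the hypothesis directly on $u_n:=A(D)(\psi_n e_j)$, where $\psi_n\in C_c^\infty(B)$ and $\|\Delta(D)\psi_n\|_{L^2}\le 1$. Since $PA=\Delta PP^+$ a.e.\ and $PP^+$ is a pointwise orthogonal projection, $\|P(D)u_n\|_{L^2}\le\|\Delta(D)\psi_n\|_{L^2}\le 1$. At the same time $Q(D)u_n=(q_{lj}(D)\psi_n)_l$. So the hypothesis gives relative compactness of each $(q_{lj}(D)\psi_n)_n$ at once, on the full ball $B$.

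Your chain is valid: the compact extension $T$ of $P(D)u\mapsto Q(D)u$, Schauder, the Riesz solution $u_1=T^*f$, the cut-off, the bounded multiplier $(P^*)^+P^*$, and dualizing back through the compact map $K\colon f_j\mapsto u_{lj}$. This does yield $q_{lj}(D)v=K^*\Delta(D)v$ for $v\in C_c^\infty(\tfrac12B)$, and passing from $\tfrac12B$ to $B$ is harmless. But the linear selection of solutions, the annular remainder and the shrinking of the ball are all overhead. The explicit polynomial right inverse $A$ makes them unnecessary, so the duality buys nothing here. The same substitution even gives part (i) of Proposition~\ref{prop.2} in one line.

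One small slip: the pairing identity you actually need is $\langle q_{lj}(D)v,f_j\rangle=\langle\Delta(D)v,u_{lj}\rangle$, coming from $\Delta^*(D)u_{lj}=q_{lj}^*(D)f_j+R'_{lj}$. The extra $\Delta(D)$ on the left of your display does not belong there, although your adjoint statement uses the correct form.
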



\begin{proof}
The (weaker) domination $Q\prec P$ follows imeditely from Proposition \ref%
{prop.2}. It remains to show that $\widetilde{q}_{lj}/\widetilde{\Delta }%
\rightarrow 0$ at $\infty $, for any $l=1,...,L$, $j=1,...,M$.

Consider a sequence of functions $(\psi _{n})_{n\geq 1}$ in $\mathcal{E}%
^{\prime }(B)$ such that $\left\Vert \Delta (D)\psi _{n}\right\Vert
_{L^{2}}\leq 1$ and define $f_{n}:=(0,...,0,\psi _{n},0,...,0)$ (on the $j$%
-th position) and $u_{n}:=A(D)f_{n}\in \mathcal{E}^{\prime }(B)^{N}$, for
any $n\geq 1$. We have 
\begin{eqnarray*}
\left\Vert P(D)u_{n}\right\Vert _{L^{2}} &=&\left\Vert
P(D)A(D)f_{n}\right\Vert _{L^{2}}=\left\Vert P(D)P^{+}(D)\Delta
(D)f_{n}\right\Vert _{L^{2}} \\
&\lesssim &\left\Vert PP^{+}\Delta \widehat{f}_{n}\right\Vert
_{L^{2}}\lesssim \left\Vert \Delta \widehat{f}_{n}\right\Vert _{L^{2}}\sim
\left\Vert \Delta (D)f_{n}\right\Vert _{L^{2}} \\
&=&\left\Vert \Delta (D)\psi _{n}\right\Vert _{L^{2}}\leq 1,
\end{eqnarray*}%
where we have used the fact that the entries of $PP^{+}$are uniformly
bounded (see for instance the argument after (\ref{prop2-2})).

Now we have that $(Q(D)u_{n})_{n\geq 1}$ is relatively compact in $L^{2}$,
i.e., $(Q(D)A(D)f_{n})_{n\geq 1}$ is relatively compact in $L^{2}$. By the
definition of $f_{n}$ (recall the notation $QA=(q_{lj})_{l,j}$), this
implies that $(q_{lj}(D)\psi _{n})_{n\geq 1}$ is relatively compact in $%
L^{2} $, for any $l=1,...,L$, $j=1,...,M$.

We have obtained that for any sequence $(\psi _{n})_{n\geq 1}$ in $\mathcal{E%
}^{\prime }(B)$ such that $\left\Vert \Delta (D)\psi _{n}\right\Vert
_{L^{2}}\leq 1$, the sequence $(q_{lj}(D)\psi _{n})_{n\geq 1}$ is relatively
compact in $L^{2}$, for any $l,j\in \{1,...,N\}$. By Theorem 10.3.2 in \cite%
{Hor2} it is easy to observe that $B_{2,\widetilde{\Delta }}\cap \mathcal{E}%
^{\prime }(B)$ embedds compactly in $B_{2,\widetilde{q}_{lj}}$. Using now
Theorem \ref{th.cH} we get $\widetilde{q}_{lj}/\widetilde{\Delta }%
\rightarrow 0$ at $\infty $ and Proposition \ref{prop.4} is proved.
\end{proof}


\section{Application to lower semicontinuity}

The goal of this final section is to establish Theorem \ref{thm:main_lsc}. As mentioned in the introduction, having the neat boundary conditions enable
us to prove a statement for signed integrands which does not allow for
effects of concentration at the boundary.

Towards Theorem \ref{thm:main_lsc}, we can assume without loss of generality that $%
\Omega $ is a cube. We will carefully choose a cube that will enable us to
deal with concentration effects easily in the main proof.

Given a cube $\Omega \subset \mathbf{R}^{d}$ (with the axes parallel to the
coordinate axes) and a dyadic integer $m$, we denote by $\mathcal{F}%
_{m}(\Omega )$ the family of the $m^{d}$ pairwise disjoint and congruent
small cubes obtained by partitioning $\Omega $. We denote by $\mathbf{E}%
_{m}^{\Omega }$ the conditional expectation operator with respect to the $%
\sigma $-algebra of the cubes in $\mathcal{F}_{m}(\Omega )$. In order to
simplify the notation, when $\Omega $ is understood from the context, we
write $\mathbf{E}_{m}$ instead of $\mathbf{E}_{m}^{\Omega }$. Also, for a
given $\delta \in (0,1)$ we consider the set 
\begin{equation*}
\Omega _{\delta }^{m}:=\bigcup\limits_{\mathcal{Q}\in \mathcal{F}_{m}(\Omega
)}(1-\delta )\mathcal{Q}.
\end{equation*}

\begin{lemma}
\label{lem.Q}Fix some $\varepsilon >0$ and some function $v\in L_{c}^{2}(B)^N
$. Let $(v_{j})_{j\geq 1}$ be a bounded sequence in $L_{c}^{2}(B)^{N}$.

Then, there exists a dyadic integer $m$ and a cube $\Omega \supset B$ such
that 
\begin{equation*}
\left\Vert \mathbf{E}_{m}v-v\right\Vert _{L^{2}}<\varepsilon ,
\end{equation*}%
and, for any sufficiently small $\delta >0$, we have 
\begin{equation*}
\liminf_{j\rightarrow \infty }\left\Vert v_{j}\right\Vert _{L^{2}(\Omega
\backslash \Omega _{\delta }^{m})}<\varepsilon ,
\end{equation*}
\end{lemma}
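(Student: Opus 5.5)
The plan is to fix the cube first, then $m$, then use a translation-averaging argument to place the boundary layer of the grid where the $v_j$ carry little mass.

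\emph{Step 1 (choice of $m$).} Fix a reference cube $\Omega_0\supset 2B$ of side $2\ell$. By density of continuous functions in $L^2$, and since conditional expectations onto the dyadic grids of a cube converge to the identity in $L^2$ (martingale convergence, or simply uniform continuity of an approximating $w\in C_c(\mathbf{R}^d)^N$ together with $\|\mathbf{E}_m\|_{L^2\to L^2}\le 1$), we can choose a dyadic $m$ with $\|\mathbf{E}_m^{\Omega}v-v\|_{L^2}<\varepsilon$. We need this \emph{uniformly} over all translates $\Omega=\Omega_0+t$ with $|t|\le \ell/2$, which still contain $B$. Take $w$ continuous with $\|v-w\|_{L^2}<\varepsilon/3$. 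Then
\[
\|\mathbf{E}_m^{\Omega}v-v\|_{L^2}\le 2\|v-w\|_{L^2}+\|\mathbf{E}^{\Omega}_m w-w\|_{L^2},
\]
and the last term is controlled by the modulus of continuity of $w$ at scale $2\ell\sqrt d/m$, independently of $t$. So one $m$ works for all such translates.

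\emph{Step 2 (choice of the translate and of $\delta$).} Let $S_\delta(t)=(\Omega_0+t)\setminus(\Omega_0+t)^m_\delta$, the union of the boundary layers of width of order $\delta\ell/m$ of all small cubes. Its characteristic function is $\chi_{S_\delta(0)}(x-t)$, and $S_\delta(0)$ is a finite union of slabs around the grid hyperplanes. Averaging over $t$ in the cube $T=[-\ell/m,\ell/m]^d$ (one grid period) gives
\[
\frac{1}{|T|}\int_T\|v_j\|^2_{L^2(S_\delta(t))}\,dt=\int |v_j(x)|^2\,\frac{|\{t\in T: x-t\in S_\delta(0)\}|}{|T|}\,dx\le C_d\,\delta\,\sup_j\|v_j\|^2_{L^2}.
\]
This uses the periodicity of the grid in $t$; near the outer faces of $\Omega$ there is no periodicity, but the $v_j$ vanish there because they are supported in $B$ and $\Omega\supset 2B$. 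By Fatou,
\[
\int_T\liminf_j\|v_j\|^2_{L^2(S_\delta(t))}\,dt\le C\delta |T|.
\]

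\emph{Step 3 (uniformity in $\delta$ — the main obstacle).} The difficulty is that the statement asks for a single $\Omega$ that works for all sufficiently small $\delta$, whereas Step 2 only gives, for each $\delta$, a good set of $t$ of large measure. To fix this, take $\delta_k=2^{-k}$ and apply Chebyshev: the set of $t$ with $\liminf_j\|v_j\|^2_{L^2(S_{\delta_k}(t))}\ge \varepsilon^2$ has measure at most $C2^{-k}|T|/\varepsilon^2$. Summing over $k\ge k_0$ gives total measure below $|T|$ for $k_0$ large, so some $t$ is good for every $k\ge k_0$. Since $S_\delta(t)$ is monotone in $\delta$, the bound then holds for all $\delta<\delta_{k_0}$. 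With this $t$, set $\Omega=\Omega_0+t$; combined with Step 1, this gives both conclusions.
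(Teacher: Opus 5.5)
Your proof is correct, but it takes a different route from the paper in both halves.

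\textbf{First conclusion.} The paper fixes $\Omega_0$ with $B+B\subset\Omega_0$ and proves that $g_m(\tau)=\Vert\mathbf{E}_m^{\Omega_0+\tau}v-v\Vert_{L^2}$ is continuous in $\tau$. It then picks $m$ with $g_m(0)<\varepsilon/2$, which gives a small ball $B(0,r)$ of admissible shifts. You instead get uniformity over all shifts at once, by approximating $v$ with a uniformly continuous $w$ and using $\Vert\mathbf{E}_m\Vert\le 1$.

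\textbf{Second conclusion.} The paper uses a discrete pigeonhole. It takes $M\approx 2dC^2/\varepsilon^2$ diagonal shifts $\tau_l=t_le$ and claims geometrically that every point lies in at most $d$ of the layers $G_\delta+\tau_l$. Hence $\min_l\Vert v_j\Vert^2_{L^2(G_\delta+\tau_l)}\le dC^2/M<\varepsilon^2$ for every $j$, so some $l_0$ is good for infinitely many $j$. You replace this by averaging over one grid period. Tonelli bounds the mean layer mass by $(1-(1-\delta)^d)C^2\le d\delta C^2$, and Fatou plus Chebyshev give a good shift for the $\liminf$.

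So in the paper the smallness comes from the number of shifts, with small $\delta$ only ensuring bounded overlap. In yours it comes directly from the volume fraction of the layer.

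\textbf{What each route buys.} Your version avoids the overlap geometry. In the paper that geometry tacitly needs the shifts $t_l\in[0,r/2]$ to stay within one grid period $h$. Otherwise two shifts differing by $h$ produce the same interior layers and $\sum_l\mathbf 1_{G_\delta+\tau_l}\le d$ fails; this is repaired by shrinking $r$. The paper's version has the merit of an explicit finite list of candidate cubes and a pure pigeonhole on indices, with no integration in the shift parameter.

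\textbf{Minor remarks.}
\begin{itemize}
\item Your Step 3 is not a real obstacle. Since $S_\delta(t)\subset S_{\delta'}(t)$ for $\delta\le\delta'$, it suffices to fix one $\delta_1$ with $d\delta_1C^2<\varepsilon^2$ and a shift good for $\delta_1$. The same monotonicity is what makes the paper's $l_0$ independent of $\delta$. The sum over $\delta_k=2^{-k}$ is harmless but redundant.
\item The caveat about outer faces is unnecessary. $S_\delta(0)$ lies in the periodic slab set $\{y:\exists i,\ \mathrm{dist}(y_i,a_i+h\mathbf Z)<\delta h/2\}$, where $a$ is a vertex of $\Omega_0$ and $h=2\ell/m$. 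Hence the fraction bound holds for every $x$.
\item You should center $\Omega_0$ at the origin, so that the cubes $\Omega_0+t$ with $|t|\le\ell/2$ really contain $B$.
\end{itemize}
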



\begin{proof}
Fix some $\varepsilon \in (0,1)$. Let $\Omega _{0}$ be a cube such that $%
B\subset \Omega _{\tau }:=\Omega _{0}+\tau $, for any $\tau \in B$ (in other
words, $B+B\subset \Omega _{0}$). Introduce the sequence of functions $%
(g_{m})_{m-dyadic}$, defined on $B$ by 
\begin{equation*}
g_{m}(\tau ):=\left\Vert \mathbf{E}_{m}^{\Omega _{\tau }}v-v\right\Vert
_{L^{2}},
\end{equation*}

We have the uniform bound 
\begin{equation*}
g_{m}(\tau )\leq \left\Vert \mathbf{E}_{m}^{\Omega _{\tau }}v\right\Vert
_{L^{2}}+\left\Vert v\right\Vert _{L^{2}}\leq 2\left\Vert v\right\Vert
_{L^{2}},
\end{equation*}%
for any $\tau \in B$ and any dyadic integer $m$. One can also see that each $%
g_{m}$is continuous on $B$. Indeed, for any function $f\in L^{1}(B)$ and any
cube $\mathcal{Q}$ we have, by the dominated convergence theorem,%
\begin{equation}
\int_{\mathcal{Q+\tau }_{n}}fdx\rightarrow \int_{\mathcal{Q+\tau }}fdx,
\label{Q-tau}
\end{equation}%
for any sequence $(\tau _{n})_{n\geq 1}$ in $B$ converging to some $\tau \in
B$. Now, by writing 
\begin{eqnarray*}
g_{m}^{2}(\tau ) &=&\left\Vert \mathbf{E}_{m}^{\Omega _{\tau }}v\right\Vert
_{L^{2}}^{2}-\left\langle \mathbf{E}_{m}^{\Omega _{\tau }}v,v\right\rangle
-\left\langle \mathbf{E}_{m}^{\Omega _{\tau }}v,v\right\rangle +\left\Vert
v\right\Vert _{L^{2}}^{2} \\
&=&-\sum_{\mathcal{Q}\in \mathcal{F}_{m}(\Omega _{0})}\frac{1}{|\mathcal{Q}|}%
\left\vert \int_{\mathcal{Q+\tau }}v(y)dy\right\vert ^{2}+\left\Vert
v\right\Vert _{L^{2}}^{2},
\end{eqnarray*}%
and using (\ref{Q-tau}) we get the continuity of $g_{m}$.

Also, we have $g_{m}(0)\rightarrow 0$, when $m\rightarrow \infty $
dyadically. Hence, for a fixed $m$ sufficiently large, we have $%
g_{m}(0)<\varepsilon /2$. By the continuity of $g_{m}$, we get $g_{m}(\tau
)<\varepsilon $, for any $\tau \in B(0,r)$, for some $r\in (0,1)$. In other
words, 
\begin{equation}
\left\Vert \mathbf{E}_{m}^{\Omega _{\tau }}v-v\right\Vert
_{L^{2}}<\varepsilon ,  \label{Exp}
\end{equation}%
for any $\tau \in B(0,r)$.

We define $\delta _{0}:=r\varepsilon ^{2}/(8C^{2}d)$ and the integer%
\begin{equation*}
M:=\left\lceil \frac{2dC^{2}}{\varepsilon ^{2}}\right\rceil ,
\end{equation*}%
where 
\begin{equation}
C:=\sup_{j\geq 1}\left\Vert v_{j}\right\Vert _{L^{2}}<\infty .  \label{vj-C}
\end{equation}

Consider the numbers $t_{1},...,t_{M}\in \lbrack 0,r/2]$ with $%
t_{l}:=rl/(2M) $, for any $l\in \{1,...,M\}$, and introduce the vectors $%
\tau _{1},...,\tau _{M}\in B(0,r)$, with $\tau _{l}:=t_{l}e$, for any $l\in
\{1,...,M\}$, where $e:=(1,...,1)\in \mathbf{R}^{d}$.

For a number $\delta \in (0,\delta _{0}]$ we consider the set 
\begin{equation*}
G_{\delta }:=\Omega _{0}\backslash (\Omega _{0})_{\delta }^{m},
\end{equation*}%
and observe that for any $\tau \in B$, 
\begin{equation}
G_{\delta }+\tau :=\Omega _{0}\backslash (\Omega _{0})_{\delta }^{m}+\tau
=\Omega _{\tau }\backslash (\Omega _{\tau })_{\delta }^{m}.  \label{G-tau}
\end{equation}

Since $\tau _{l}=l\tau _{1}$ and 
\begin{equation*}
|\tau _{1}|>\sqrt{d}r/(2M)>\sqrt{d}\delta ,
\end{equation*}%
if $\tau _{l_{1}},...,\tau _{l_{s}}$ are pairwise distinct, then the
intersection 
\begin{equation*}
(G_{\delta }+\tau _{l_{1}})\cap ...\cap (G_{\delta }+\tau _{l_{s}}),
\end{equation*}%
is embedded in $\delta -$neighborhoods of affine spaces of dimension $d-s$
between which the distance is at least $1-\delta $. (By convention, an
affine space of negative dimension is the empty set.) In particular, we get 
\begin{equation*}
\sum_{l=1}^{M}\mathbf{1}_{Sk_{\delta }+\tau _{l}}(x)\leq d,
\end{equation*}
for any $x\in\mathbf{R}^d$. Using this and (\ref{vj-C}) we can write

\begin{eqnarray*}
M\min_{l\in \{1,...,M\}}\left\Vert v_{j}\right\Vert _{L^{2}(G_{\delta }+\tau
_{l})}^{2} &\leq &\sum_{l=1}^{M}\left\Vert v_{j}\right\Vert
_{L^{2}(G_{\delta }+\tau _{l})}^{2}=\int_{\mathbf{R}^{d}}|v_{j}(x)|^{2}%
\left( \sum_{l=1}^{M}\mathbf{1}_{G_{\delta }+\tau _{l}}(x)\right) dx \\
&\leq &d\int_{\mathbf{R}^{d}}|v_{j}(x)|^{2}dx\leq dC^{2},
\end{eqnarray*}%
which gives us 
\begin{equation}
\min_{l\in \{1,...,M\}}\left\Vert v_{j}\right\Vert _{L^{2}(G_{\delta }+\tau
_{l})}^{2}\leq \frac{dC^{2}}{M}<\varepsilon ^{2}.  \label{vj-b}
\end{equation}

If 
\begin{equation*}
A_{l}:=\{j\in \mathbf{N}^{\ast }\text{ }|\text{ }\left\Vert v_{j}\right\Vert
_{L^{2}(G_{\delta }+\tau _{l})}<\varepsilon \},
\end{equation*}%
then (\ref{vj-b}) gives us $\bigcup_{l=1}^{M}A_{l}=\mathbf{N}^{\ast }$.
Hence, there exists one set $A_{l_{0}}$ that is infinite. It follows that 
\begin{equation*}
\liminf_{j\rightarrow \infty }\left\Vert v_{j}\right\Vert _{L^{2}(G_{\delta
}+\tau _{l_{0}})}<\varepsilon .
\end{equation*}
Due to this inequality, (\ref{G-tau}) and (\ref{Exp}) (recall that $\tau
_{l_{0}}\in B(0,r)$) one can choose $\Omega :=\Omega _{0}+\tau _{l_{0}}$.
\end{proof}

We can now proceed with the proof of the main lower semicontinuity result:

\begin{proof}[Proof of Theorem \ref{thm:main_lsc}.]
We first apply Lemma \ref{lem.Q} to the functions $v:=Su$ and $v_{j}:=Sw_{j}$%
, where $w_{j}:=u_{j}-u$ (note that since $v_{j}$ is weakly convergent, it
is also bounded in $L^{2}$). Fix some $\varepsilon >0$. There exists a
dyadic integer $m$ and a cube $\Omega \supset B$ such that 
\begin{equation}
\left\Vert \mathbf{E}_{m}Su-Su\right\Vert _{L^{2}}<\varepsilon ,
\label{LSC-1}
\end{equation}%
and, for any sufficiently small $\delta >0$, we have 
\begin{equation}
\liminf_{j\rightarrow \infty }\left\Vert Sw_{j}\right\Vert _{L^{2}(\Omega
\backslash \Omega _{\delta }^{m})}<\varepsilon .  \label{LSC-2}
\end{equation}

We can suppose that $(u_{j})_{j\geq 1}$ corresponds to a subsequence
realizing the $\inf $ in (\ref{LSC-2}).

Note that since $u$ and $u_{j}$ are supported on $B$, we have $%
F(Su_{j})-F(Su)=0$, outside of $B$. Thanks to this we can integrate on the
cube $\Omega$ instead of $B$: 
\begin{equation}
\int_{B}(F(Su_{j})-F(Su))dx=\int_{\Omega }(F(Su_{j})-F(Su))dx.
\label{B-Omega}
\end{equation}

Write now%
\begin{equation}
\int_{\Omega }(F(Su_{j})-F(Su))dx=T_{1}+T_{2}^{j}+T_{3}^{j},  \label{descF}
\end{equation}%
where 
\begin{equation*}
T_{1}:=\int_{\Omega }(F(\mathbf{E}_{m}Su)-F(Su))dx,
\end{equation*}%
\begin{equation*}
T_{2}^{j}:=\int_{\Omega }(F(Su_{j})-F(\mathbf{E}_{m}Su+Sw_{j}))dx,
\end{equation*}%
and 
\begin{equation*}
T_{3}^{j}:=\int_{\Omega }(F(\mathbf{E}_{m}Su+Sw_{j})-F(\mathbf{E}_{m}Su))dx.
\end{equation*}

\textit{We estimate }$T_{1}$\textit{. }By using Lemma 4.6 in\textit{\ }\cite%
{GR} and (\ref{LSC-1}) (and standard estimates) we can write%
\begin{eqnarray}
|T_{1}| &\leq &\int_{\Omega }|F(\mathbf{E}_{m}Su)-F(Su)|dx\lesssim
\int_{\Omega }(1+|\mathbf{E}_{m}Su|+|Su|)(|\mathbf{E}_{m}Su-Su|)dx  \notag \\
&\lesssim &\left( \int_{\Omega }(1+|\mathbf{E}_{m}Su|^{2}+|Su|^{2})dx\right)
^{1/2}\left\Vert \mathbf{E}_{m}Su-Su\right\Vert _{L^{2}}  \notag \\
&\lesssim &(|\Omega |^{1/2}+\left\Vert \mathbf{E}_{m}Su\right\Vert
_{L^{2}}+\left\Vert Su\right\Vert _{L^{2}})\left\Vert \mathbf{E}%
_{m}Su-Su\right\Vert _{L^{2}}  \label{T1-3} \\
&\lesssim &(1+\left\Vert Su\right\Vert _{L^{2}})\left\Vert \mathbf{E}%
_{m}Su-Su\right\Vert _{L^{2}}  \notag \\
&\lesssim &_{u}\left\Vert \mathbf{E}_{m}Su-Su\right\Vert
_{L^{2}}<\varepsilon .  \label{T1-5}
\end{eqnarray}

\textit{We estimate }$T_{2}^{j}$\textit{. }Note that $Su_{j}-\mathbf{E}%
_{m}Su-Sw_{j}=-(\mathbf{E}_{m}Su-Su)$. As in (\ref{T1-3}) we write 
\begin{eqnarray}
|T_{2}^{j}| &\lesssim &(|\Omega |^{1/2}+\left\Vert Su_{j}\right\Vert
_{L^{2}}+\left\Vert \mathbf{E}_{m}Su+Sw_{j}\right\Vert _{L^{2}})\left\Vert
Su_{j}-\mathbf{E}_{m}Su-Sw_{j}\right\Vert _{L^{2}}  \notag \\
&=&(|\Omega |^{1/2}+\left\Vert Su_{j}\right\Vert _{L^{2}}+\left\Vert \mathbf{%
E}_{m}Su+Sw_{j}\right\Vert _{L^{2}})\left\Vert \mathbf{E}_{m}Su-Su\right%
\Vert _{L^{2}}  \notag \\
&\lesssim &(1+\left\Vert Su_{j}\right\Vert _{L^{2}}+\left\Vert Su\right\Vert
_{L^{2}}+\left\Vert Sw_{j}\right\Vert _{L^{2}})\left\Vert \mathbf{E}%
_{m}Su-Su\right\Vert _{L^{2}}  \notag \\
&\lesssim &\varepsilon ,  \label{T2}
\end{eqnarray}%
where we have used the fact that, since $Su_{j}$ converges weakly to $Su$
and $Sw_{j}$ converges weakly to $0$, the quantities $\left\Vert
Su_{j}\right\Vert _{L^{2}}$ and $\left\Vert Sw_{j}\right\Vert _{L^{2}}$ are
uniformly bounded. (The implicit constant in (\ref{T2}) depends on $u$ and
on the sequence $(u_{j})_{j\geq 1}$.)

\bigskip

\textit{We estimate }$T_{3}^{j}$\textit{. }We consider a function $\eta \in
C_{c}^{\infty }(\Omega _{\delta /2}^{m})$ such that $\eta \equiv 1$ on $%
\Omega _{\delta }^{m}$, and denote by $w_{j,\delta }$ the functions $%
w_{j,\delta }:=\eta w_{j}$, for any $j\geq 1$. We decompose $T_{3}$ as $%
T_{3}^{j}=T_{31}^{j}+T_{32}^{j}$, where

\begin{equation*}
T_{31}^{j}:=\int_{\Omega }(F(\mathbf{E}_{m}Su+Sw_{j,\delta })-F(\mathbf{E}%
_{m}Su))dx,
\end{equation*}%
and

\begin{equation*}
T_{32}^{j}:=\int_{\Omega }(F(\mathbf{E}_{m}Su+Sw_{j})-F(\mathbf{E}%
_{m}Su+Sw_{j,\delta }))dx.
\end{equation*}

By the $S-$quasiconvexity of $F$ (see (\ref{qc})) we have 
\begin{equation}
T_{31}^{j}\geq 0.  \label{T31}
\end{equation}

In the case of $T_{32}^{j}$ we see that $F(\mathbf{E}_{m}Su+Sw_{j})-F(%
\mathbf{E}_{m}Su+Sw_{j,\delta })=0$ in $\Omega _{\delta }^{m}$ and we write%
\begin{eqnarray}
|T_{32}^{j}| &=&|\int_{\Omega \backslash \Omega _{\delta }^{m}}(F(\mathbf{E}%
_{m}Su+Sw_{j})-F(\mathbf{E}_{m}Su+Sw_{j,\delta }))dx|  \notag \\
&\leq &\int_{\Omega \backslash \Omega _{\delta }^{m}}|F(\mathbf{E}%
_{m}Su+Sw_{j})|dx+\int_{\Omega \backslash \Omega _{\delta }^{m}}|F(\mathbf{E}%
_{m}Su+Sw_{j,\delta })|dx  \notag \\
&=&:a_{1}^{j}+a_{2}^{j}.  \label{T32-1}
\end{eqnarray}

Since we have $|F(x)|\lesssim \left\langle x\right\rangle ^{2}$, on $\mathbf{%
R}^{d}$, we get%
\begin{equation*}
a_{1}^{j}\lesssim \int_{\Omega \backslash \Omega _{\delta }^{m}}(1+|\mathbf{E%
}_{m}Su|^{2}+|Sw_{j}|^{2})dx\leq |\Omega \backslash \Omega _{\delta
}^{m}|+\left\Vert \mathbf{E}_{m}Su\right\Vert _{L^{2}(\Omega \backslash
\Omega _{\delta }^{m})}^{2}+\left\Vert Sw_{j}\right\Vert _{L^{2}(\Omega
\backslash \Omega _{\delta }^{m})}^{2}.
\end{equation*}

By (\ref{LSC-2}) we have $\left\Vert Sw_{j}\right\Vert _{L^{2}(\Omega
\backslash \Omega _{\delta }^{m})}^{2}<\varepsilon ^{2}<\varepsilon $. If $%
\delta $ is sufficiently small, we have $|\Omega \backslash \Omega _{\delta
}^{m}|<\varepsilon $ and also, by the dominated convergence theorem, $%
\left\Vert Sw_{j}\right\Vert _{L^{2}(\Omega \backslash \Omega _{\delta
}^{m})}^{2}<\varepsilon $. Hence, 
\begin{equation}
|\Omega \backslash \Omega _{\delta }^{m}|+\left\Vert \mathbf{E}%
_{m}Su\right\Vert _{L^{2}(\Omega \backslash \Omega _{\delta
}^{m})}^{2}+\left\Vert Sw_{j}\right\Vert _{L^{2}(\Omega \backslash \Omega
_{\delta }^{m})}^{2}\lesssim \varepsilon ,  \label{a1'}
\end{equation}%
and 
\begin{equation}
a_{1}^{j}\lesssim \varepsilon .  \label{a1}
\end{equation}

We use again the inequality $|F(x)|\lesssim \left\langle x\right\rangle ^{2}$%
, on $\mathbf{R}^{d}$, in order to estimate $a_{2}^{j}$:%
\begin{eqnarray}
a_{2}^{j} &\lesssim &\int_{\Omega \backslash \Omega _{\delta }^{m}}(1+|%
\mathbf{E}_{m}Su|^{2}+|Sw_{j,\delta }|^{2})dx  \notag \\
&\lesssim &\int_{\Omega \backslash \Omega _{\delta }^{m}}(1+|\mathbf{E}%
_{m}Su|^{2}+|\eta Sw_{j}|^{2}+|[S,\eta ]w_{j}|^{2})dx  \notag \\
&\lesssim &|\Omega \backslash \Omega _{\delta }^{m}|+\left\Vert \mathbf{E}%
_{m}Su\right\Vert _{L^{2}(\Omega \backslash \Omega _{\delta
}^{m})}^{2}+\left\Vert Sw_{j}\right\Vert _{L^{2}(\Omega \backslash \Omega
_{\delta }^{m})}^{2}+\left\Vert [S,\eta ]w_{j}\right\Vert _{L^{2}(\Omega
\backslash \Omega _{\delta }^{m})}^{2},  \label{a2-u}
\end{eqnarray}%
where 
\begin{equation*}
\lbrack S,\eta ]=\eta S-S\eta .
\end{equation*}

Note that 
\begin{equation*}
\lbrack S,\eta ]=[P(D),\eta ]=\sum_{\alpha \neq 0}\frac{D^{\alpha }\eta }{%
\alpha !}P^{(\alpha )}(D).
\end{equation*}%
Since $Sw_{j}=P(D)w_{j}$ converges weakly to $0$ in $L^{2}$, by the
assumptions on the operator $S$ ($P^{(\alpha )}\prec _{c}P$ for any $\alpha
\neq 0$) and Corollary \ref{cor.conv0} we get that, for any $\alpha \neq 0$,
\ $P^{(\alpha )}(D)w_{j}\rightarrow 0$ strongly in $L^{2}$, on a
subsequence.\ From this we obtain that\ $[S,\eta ]w_{j}\rightarrow 0$
strongly in $L^{2}$, on a subsequence. Hence, 
\begin{equation*}
\left\Vert \lbrack S,\eta ]w_{j}\right\Vert _{L^{2}(\Omega \backslash \Omega
_{\delta }^{m})}^{2}\lesssim \varepsilon ,
\end{equation*}%
for large $j$ on a subsequence. By this, (\ref{a1'}) and (\ref{a2-u}) give
us 
\begin{equation*}
a_{2}^{j}\lesssim \varepsilon .
\end{equation*}

By this, (\ref{a1'}) and (\ref{T32-1}) we have 
\begin{equation*}
\liminf_{j\rightarrow \infty }|T_{32}^{j}|\lesssim \varepsilon ,
\end{equation*}%
which together with (\ref{T1-5}), (\ref{T2}), (\ref{T31}) and (\ref{descF})
(see also (\ref{B-Omega})) gives us 
\begin{equation*}
\liminf_{j\rightarrow \infty }\int_{B}(F(Su_{j}(x))-F(Su(x)))dx\gtrsim
-\varepsilon .
\end{equation*}

Since $\varepsilon $ is arbitrary small, we get the conclusion. 
\end{proof}




\end{document}